\DeclareMathOperator{\vol}{vol}
\DeclareMathOperator{\Tr}{Tr}
\DeclareMathOperator{\SL}{SL}
\DeclareMathOperator{\GL}{GL}
\DeclareMathOperator{\Grad}{Grad}
\DeclareMathOperator{\re}{Re}
\DeclareMathOperator{\md}{mod}
\DeclareMathOperator{\Aut}{Aut}
\title[Spherical Designs and Heights of Euclidean Lattices]{Spherical Designs and Heights of Euclidean Lattices}
\keywords{Euclidean lattice; quadratic form; height; spherical design; modular form}
\begin{document}
\setfnsymbol{chicago}
\setcounter{secnumdepth}{4}

\theoremstyle{plain}
\newtheorem{theor}{Theorem}
\newtheorem{prop}[theor]{Proposition}
\newtheorem{cor}[theor]{Corollary}
\newtheorem{lem}[theor]{Lemma}
\newtheorem{conj}[theor]{Conjecture}
\newtheorem{ass}[theor]{Assumption}

\theoremstyle{definition}
\newtheorem{defin}[theor]{Definition}
\newtheorem*{thm}{Theorem} 
\newtheorem{notation}[theor]{Notation}

\theoremstyle{remark}
\newtheorem{rem}[theor]{Remark}
\newtheorem*{rem*}{Remark}
\newtheorem*{cons*}{Consequence}
\newtheorem{ex}[theor]{Example}
\newtheorem*{conc*}{Conclusion}

\newcommand{\mbb}[1]{\mathbb{#1}}
\newcommand{\mbf}[1]{\mathbf{#1}}
\newcommand{\mc}[1]{\mathcal{#1}}
\newcommand{\Q}{\mathbf{Q}}
\newcommand{\R}{\mathbf{R}}
\newcommand{\Z}{\mathbf{Z}}
\newcommand{\I}{\mathbf{I}}
\newcommand{\C}{\mathcal{C}}
\newcommand{\D}{\mathcal{D}}
\newcommand{\OK}{\mathcal{O}_K}
\newcommand{\pd}{\partial}

\author{Renaud Coulangeon}
\address{Univ. Bordeaux, IMB, UMR 5251, F-33400 Talence, France.\\
CNRS, IMB, UMR 5251, F-33400 Talence, France.}
\email{renaud.coulangeon@u-bordeaux.fr}
\author{Giovanni Lazzarini}
\email{Giovanni.Lazzarini@u-bordeaux.fr}

\begin{abstract}
We study the connection between the theory of spherical designs and the question of extrema of the height function of lattices. More precisely, we show that a full-rank $n$-dimensional Euclidean lattice $\Lambda$, all layers of which hold a spherical $2$-design, realises a stationary point for the height $h(\Lambda)$, which is defined as the first derivative at the point $0$ of the spectral zeta function of the associated flat torus $\zeta(\mathbf{R}^n /\Lambda)$. Moreover, in order to find out the lattices for which this $2$-design property holds, a strategy is described which makes use of theta functions with spherical coefficients, viewed as elements of some space of modular forms. Explicit computations in dimension $n\leq 7$, performed with Pari/GP and Magma, are reported.        
\end{abstract}

\maketitle

\section{Introduction}
The aim of this article is to investigate (local) extremality properties of the height on the set of lattices of covolume $1$, and to describe its stationary points in terms  of spherical designs.

The \emph{height} of a Euclidean lattice $\Lambda$ is defined as the derivative at the point $s=0$ of the spectral zeta function of the flat torus associated to $\Lambda$. More generally, if $(X, g) $ is a compact connected Riemannian manifold without boundary, the spectrum of the associated Laplace operator is a discrete sequence of real numbers 
$0 = \lambda_0 < \lambda_1 \leq \lambda_2 \leq \cdots $, converging to infinity, and the \emph{spectral zeta function} is then defined, for $\re (s) >>0$, by the series 
\begin{equation}
\zeta_{(X, g)}(s) = \sum_{i=1}^{\infty} \dfrac{1}{\lambda_i^s}. 
\end{equation}    
This expression is well-known to admit a meromorphic continuation to $\mathbf C$, holomorphic at $0$ (see \cite{BGM}, \cite{GHL}). This allows to  give meaning to the determinant of the Laplacian, \textit{via} the standard zeta regularisation process  
\begin{equation}
\det \Delta_{g}:= \exp (-\zeta'_{(X, g)}(0)),
\end{equation}   
which indeed can be interpreted as the formal infinite product of the $\lambda_i$'s. Then the height of $(X,g)$ is simply $h(X)= \zeta'_{(X, g)}(0)$.
A natural problem arises at this point, namely to maximise $\det \Delta_{g}$ on a given manifold $X$ as the metric $g$ varies (with fixed volume) and characterise the optimal metrics (see \cite{OPS}). 

When $X$ is a flat torus $\mathbf{R}^n / \Lambda$ associated to a full-rank lattice $\Lambda$ in the Euclidean space $\mathbf{R}^n$, then this question seems to be more tractable, though far from trivial. Indeed, for such a lattice $\Lambda$ one can construct the \emph{Epstein zeta function}
\begin{equation}
Z(\Lambda, s) := \sum_{0\neq x\in \Lambda} \dfrac{1}{\lVert x \rVert ^{2s}},\quad s\in \mathbf{C} \text{ with } \re(s)>n/2, 
\end{equation}    
which is well-known (since Epstein \cite{Ep03}) to admit a meromorphic continuation to the complex plane with a simple pole at $ s=n/2 $.
Besides, the eigenvalues of the Laplacian on  $\mathbf{R}^n / \Lambda$ are exactly $ 4\pi^2 \lVert x\rVert^2 $, for $ x $ in the standard dual 
\begin{equation}
 \Lambda^*= \lbrace y\in \R^n \, |\, (x \cdot y)\in \Z \text{ for all } x\in \Lambda\rbrace 
 \end{equation}
(see \cite{Chiu} and \cite{Sar06}), whence we have the following identity between the two zeta functions
 \begin{equation}
\zeta_{\mathbf{R}^n / \Lambda}(s) = (2\pi)^{-2s}Z(\Lambda^*, s),
\end{equation}
which gives immediately the one for the height of $ \Lambda $:
\begin{equation}
h(\Lambda)= \zeta_{\mathbf{R}^n / \Lambda}'(0)= Z'(\Lambda^*, 0) + 2\log(2\pi).
\end{equation}  
Since for $c >0$ we have that $Z(c\Lambda, s) = c^{-2s}Z(\Lambda, s)$, the question of the minima of the height makes sense only if we restrict ourselves to lattices of fixed covolume, usually $1$. From now on, let $\mathcal{L}_n^\circ$ denote the set of lattices of determinant $1$, hence of covolume $1$: then our task of finding a minimum of the height function restricted to flat tori amounts to minimise $Z'(\Lambda^*, 0)$ for $\Lambda \in\mathcal{L}_n^\circ $.

Chiu has shown in \cite{Chiu} that a global minimum exists in every dimension $n$, but the actual minimum is known only in dimension $2$ and $3$, where it is achieved respectively by the hexagonal lattice (Osgood, Phillips and Sarnak, \cite{OPS}) and the face-centred cubic lattice (Sarnak and Str\"ombergsson, \cite{SS}). Moreover, in the same paper by Sarnak and Str\"ombergsson it is proved that (the tori associated to) the $D_4$ lattice (rescaled so as to have determinant $1$), the $E_8$ lattice and the Leech lattice $\Lambda_{24}$ achieve a strict \emph{local} minimum of $h$ in dimension $4$, $8$ and $24$ respectively [the standard definition and properties of the lattices mentioned in this paper can be found in \cite{CS} or in \cite{Mar}]. Their proof goes through an inspection of the automorphism group of these lattices and makes use of quite sophisticated group theoretic tools.

The first author in his paper \cite{Cou1} has suggested a more general approach to the problem, which involves the notion of spherical design. 
A \emph{spherical $t$-design} is a finite set of points on a sphere which is well distributed from the point of view of numerical integration: more precisely, a finite subset $X$ on a sphere $\mathbf{S} \subset \mathbf R^n$ centred at $O$ is a spherical $t$-design if for every homogeneous polynomial $f$ of degree $\leq t$, 
\begin{equation}\label{eq3}
\int _{\mathbf{S}} f(x)\, dx = \dfrac{1}{\lvert X\rvert} \sum_{x\in X} f(x),
\end{equation}
where $dx$ is any $O(n)$-invariant measure on $\mathbf{S}$ (for example the surface measure on $\mathbf{S}$ induced by the Lebesgue measure on $\mathbf R^n$).  For us, the standard way of constructing such objects is to consider what are called the \emph{layers} or \emph{shells} of a lattice, that is, the sets of vectors of a given length.


The concept of a spherical design is due to Delsarte, Goethals and Seidel in their paper \cite{DGS}, but the connection between spherical designs held by the layers of a lattice and its Epstein zeta function had already been observed. 
In particular, Delone and Ryshkov had characterised in \cite{DR67} the lattices in  $\mathcal{L}_n^\circ $ that are $Z$-\emph{extreme} at any large enough $s$ (that is, that achieve a strict local minimum of the function $\Lambda \mapsto Z(\Lambda, s)$), and one of the conditions, stated in a different terminology, is that all layers of the lattice hold a $2$-design. 
Other results strengthen the idea that if a lattice $\Lambda$ holds spherical designs of strength $2$ or $4$ on some of its layers, then it should have good properties in terms of density of the associated sphere packing and of the zeta function. For example, Venkov has proved in \cite{V01} that a lattice whose first layer (i.e. its \emph{minimal vectors}) is a $4$-design achieves a local maximum of the density function. 

Coming to the height, the first author's result is as follows:
\begin{thm}[Coulangeon, 2006]
Let $\Lambda\in\mathcal{L}_n^\circ$ be such that all its layers hold  a $4$-design; then $\Lambda$ is $Z$-extreme at $s$ for any $s>n/2$, and moreover the torus associated with its dual $\Lambda^*$ achieves a strict local minimum of the height on the set of $n$-dimensional flat tori of covolume $1$.
\end{thm}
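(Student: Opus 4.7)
The plan is to parametrize variations in $\mathcal{L}_n^\circ$ near $\Lambda$ by $\Lambda_t=\exp(tA)\Lambda$ with $A$ a symmetric traceless endomorphism of $\R^n$. From the expansion
\[
\lVert e^{tA}x\rVert^2=\lVert x\rVert^2+2t\,x^TAx+2t^2\,x^TA^2x+O(t^3),
\]
any $t$-derivative up to order $2$ at $t=0$ of $\lVert e^{tA}x\rVert^{-2s}$ or of $e^{-\pi t_0\lVert e^{tA}x\rVert^2}$ involves only the polynomials $x^TAx$ (harmonic of degree $2$, since $\mathrm{tr}(A)=0$), $x^TA^2x$ (degree $2$), and $(x^TAx)^2$ (degree $4$). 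Writing each degree-$2$ polynomial as harmonic plus a multiple of $\lVert x\rVert^2$, and each degree-$4$ polynomial as harmonic plus $\lVert x\rVert^2$ times a harmonic of degree $2$ plus a multiple of $\lVert x\rVert^4$, the $2$-design hypothesis kills every harmonic degree-$2$ layer-sum and the $4$-design hypothesis kills every harmonic degree-$4$ layer-sum. Only constant multiples of $\lVert x\rVert^2$ and $\lVert x\rVert^4$ survive on each layer; the standard spherical averages $\int_{S^{n-1}}(x^TAx)^2\,d\sigma=2\lVert A\rVert^2/(n(n+2))$ and $\int_{S^{n-1}}x^TA^2x\,d\sigma=\lVert A\rVert^2/n$ show that the surviving residues are proportional to $\lVert A\rVert^2=\mathrm{tr}(A^2)$.

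For the $Z$-extremity at any fixed $s>n/2$, the series $Z(\Lambda_t,s)$ converges absolutely and termwise differentiation is legitimate; the vanishing first variation (by the $2$-design) and the strictly positive second variation (by the $4$-design combined with the computation above) yield the extremality, in the spirit of Delone--Ryshkov. For the height I would use the Mellin--Poisson representation
\[
\pi^{-s}\Gamma(s)Z(\Lambda,s)=\int_1^{\infty}(\theta_\Lambda(t)-1)\,t^{s-1}\,dt+\int_1^{\infty}(\theta_{\Lambda^*}(u)-1)\,u^{n/2-s-1}\,du+\frac{1}{s-n/2}-\frac{1}{s},
\]
with $\theta_\Lambda(t)=\sum_{x\in\Lambda}e^{-\pi t\lVert x\rVert^2}$, which yields the analytic continuation of $Z(\Lambda,s)$ to $s=0$. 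Extracting the constant term via $Z(\Lambda,0)=-1$ expresses $h(\Lambda^*)=Z'(\Lambda,0)+2\log(2\pi)$ as a sum of two convergent theta integrals plus an explicit constant. Substituting $\Lambda\mapsto\Lambda_t$, $\Lambda^*\mapsto\Lambda_t^*=\exp(-tA)\Lambda^*$, differentiating twice in $t$ under the integral sign, and using the fact that the design hypothesis transfers from $\Lambda$ to $\Lambda^*$ (by Hecke's functional equation, $\theta_{\Lambda,P}\equiv 0$ iff $\theta_{\Lambda^*,P}\equiv 0$ for every harmonic $P$), the same layer-by-layer cancellation gives a vanishing first variation and a second variation proportional to $\lVert A\rVert^2$.

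The main obstacle is the \emph{strict} positivity of the surviving radial quadratic form. After the harmonic cancellation, the coefficient of $\lVert A\rVert^2$ in the second variation of $h(\Lambda_t^*)$ at $t=0$ emerges as an explicit weighted integral of $\theta_\Lambda-1$ and $\theta_{\Lambda^*}-1$ against kernels built from $t^2\theta_\Lambda''(t)$ and $t\theta_\Lambda'(t)$ (and their $\Lambda^*$-analogues). One has to verify by integration by parts, and careful sign tracking of $\theta_\Lambda'<0$, that this coefficient is strictly positive. A secondary technical point is the interchange of the $s$- and $t$-differentiations with the improper integrals in the Mellin representation, which is justified by the uniform exponential decay of $\theta_{\Lambda_t}-1$ at infinity in a neighborhood of $t=0$.
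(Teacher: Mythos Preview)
This theorem is not proved in the present paper; it is quoted from \cite{Cou1} as background, so there is no in-paper proof to compare your attempt against. That said, your overall strategy is the right one and matches the approach of \cite{Cou1}: parametrise by $\Lambda_t=e^{tA}\Lambda$, expand to second order, and use the design hypotheses to reduce the layer sums to radial expressions proportional to $\lVert A\rVert^2$.

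For the $Z$-extremity at fixed $s>n/2$ your sketch is essentially complete. Carrying out the calculation you set up, the second variation of $Z(\Lambda_t,s)$ at $t=0$ collapses exactly to
\[
\frac{4s(2s-n)}{n(n+2)}\,\lVert A\rVert^{2}\,Z(\Lambda,s),
\]
which is strictly positive for every $s>n/2$ and every traceless symmetric $A\ne 0$; so there is no ``obstacle'' in this half of the statement.

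For the height, however, you have located but not closed the actual gap. After the harmonic cancellations the coefficient of $\lVert A\rVert^2$ in the second variation of $h(\Lambda_t^*)$ is a specific linear combination, with both signs present, of integrals of the type $\int_1^\infty t^{\alpha}e^{-\pi m_k t}\,dt$ coming from the two halves of the Riemann splitting (for $\Lambda$ and for $\Lambda^*$). Its strict positivity is \emph{not} a consequence of $\theta_\Lambda'<0$ and an integration by parts; this is precisely the substantive computation carried out in \cite{Cou1}, and it requires writing the second-order analogue of the gradient formula in Section~\ref{sec1} of the present paper and then proving an explicit inequality for the resulting radial expression. Your sentence ``one has to verify \ldots\ that this coefficient is strictly positive'' is where the proof currently stops. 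If you want to complete it, expand $F_{Q_t}(0)$ (in the notation of Section~\ref{sec1}) to second order in $H$, apply the $4$-design reduction on each layer of $Q_0$ and $Q_0^{-1}$, and then establish positivity of the remaining scalar; that last step is the content of Coulangeon's 2006 argument.
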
 
This is the first result which links the height of a lattice to spherical designs, and it applies to $D_4$, $E_8$ and $\Lambda_{24}$, as well as to a large amount of \emph{extremal modular lattices}, for which it is possible to show that all their layers hold a $4$-design (see \cite{BV}). 
However, the condition that all the layers of a given lattice  hold a $4$-design is too strong with respect to the quest of extrema of the height function: indeed, lattices whose minimal vectors form a $4$-design, the so-called \emph{strongly perfect lattices}, have been classified in dimension up to $12$ (see \cite{NV00, NV05, Q95}), and for $n=3$, $5$ and $9$ it has been proved that such a lattice does not exist, hence a fortiori a lattice with $4$-designs on every shell.
This clearly contrasts with the fact, recalled above, that the height attains a global minimum in every dimension $n$, therefore the right characterisation of the local minima of the height function is still to be found. In this direction, a contribution is our main theorem, which we state as follows:
\begin{theor}\label{th1}
Let a lattice $\Lambda\in\mathcal{L}_n^\circ$ be such that all its layers hold  a spherical $2$-design; then $\Lambda$ is a stationary point for the function $\Lambda \mapsto h(\Lambda)$.
\end{theor}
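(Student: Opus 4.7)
The plan is to compute the first variation of $h$ along a smooth path of lattices in $\mathcal{L}_n^\circ$ through $\Lambda$, and to show that the $2$-design hypothesis forces it to vanish. Because $h$ is invariant under the $O(n)$-action, it is enough to use the parametrisation $\Lambda_t=\exp(tA)\Lambda$ with $A$ symmetric of trace zero; such matrices span the transverse tangent space to $\mathcal{L}_n^\circ$ at $\Lambda$ modulo rotations, and one has $\Lambda_t^*=\exp(-tA)\Lambda^*\in\mathcal{L}_n^\circ$. The goal then reduces to proving that $\frac{d}{dt}Z'(\Lambda_t^*,0)\big|_{t=0}=0$.

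First I would work term by term in the absolute-convergence region $\re(s)>n/2$. Since $\lVert\exp(-tA)y\rVert^2=y^T\exp(-2tA)y$, differentiating under the sum yields
\begin{equation}
\left.\frac{d}{dt}Z(\Lambda_t^*,s)\right|_{t=0}=2s\sum_{\rho>0}\rho^{-2s-2}\sum_{y\in S_\rho(\Lambda^*)}y^TAy,
\end{equation}
where $S_\rho(\Lambda^*)$ is the layer of $\Lambda^*$ of radius $\rho$. Writing $A=A_0+(\Tr A/n)I$ with $A_0$ traceless, the form $y\mapsto y^TA_0y$ is a harmonic polynomial of degree $2$, so the $2$-design property of each layer of $\Lambda^*$ gives $\sum_{y\in S_\rho(\Lambda^*)}y^TA_0y=0$; since $\Tr A=0$, each inner sum vanishes and the first $t$-variation of $Z(\Lambda_t^*,s)$ is identically zero on its half-plane of convergence. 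The $2$-design hypothesis on $\Lambda$ is promoted to the same hypothesis on $\Lambda^*$ by Poisson summation: the condition is equivalent to the vanishing of the theta series $\theta_{\Lambda,P}(\tau)=\sum_{x\in\Lambda}P(x)e^{i\pi\tau\lVert x\rVert^2}$ for every harmonic polynomial $P$ of degree $2$, and the modular transformation $\tau\mapsto-1/\tau$ interchanges $\theta_{\Lambda,P}$ with $\theta_{\Lambda^*,P}$.

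To conclude, I must pass from the vanishing of $\frac{d}{dt}Z(\Lambda_t^*,s)\big|_{t=0}$ for $\re(s)>n/2$ to the vanishing of $\frac{d}{dt}Z'(\Lambda_t^*,0)\big|_{t=0}$. The natural route is the Mellin--theta integral representation obtained by splitting $\int_0^\infty(\theta_{\Lambda_t^*}(u)-1)u^{s-1}\,du$ at $u=1$ and invoking the theta functional equation: this expresses $\pi^{-s}\Gamma(s)Z(\Lambda_t^*,s)$ as two integrals on $[1,\infty)$ of $\theta_{\Lambda_t^*}-1$ and $\theta_{\Lambda_t}-1$ plus explicit simple poles. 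The $2$-design argument applied to both $\theta_{\Lambda_t}$ and $\theta_{\Lambda_t^*}$ shows that their first $t$-variations vanish pointwise in $u$, and the exponential decay of the integrands makes the differentiation under the integral uniform in $s$ on compacta away from the poles. The main technical obstacle lies exactly here: justifying that the meromorphic continuation is sufficiently regular in $t$ near $t=0$ to allow the interchange of $s$- and $t$-derivatives at $s=0$. Once this is settled, $\frac{d}{dt}h(\Lambda_t)\big|_{t=0}=0$ for every admissible $A$, which is the claimed stationarity.
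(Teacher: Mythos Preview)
Your approach is correct and essentially coincides with the paper's: both rely on the Mellin--theta integral representation, parametrise via $\exp(tA)$ with $A$ symmetric and traceless, observe that the first $t$-variation of the integrands is a sum over layers of $\sum_{x}x^tAx$, kill each such sum with the $2$-design hypothesis (since $x\mapsto x^tAx$ is harmonic of degree $2$ when $\Tr A=0$), and invoke Poisson summation to transfer the $2$-design property from $\Lambda$ to $\Lambda^*$.

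The one organisational difference is worth noting because it dissolves the ``technical obstacle'' you flag. Rather than differentiating $Z(\Lambda_t^*,s)$ in $t$ and then worrying about commuting with the $s$-derivative at $0$, the paper first evaluates: from the split integral representation one reads off directly that
\[
Z'(\Lambda^*,0)=C+F_Q(0),\qquad F_Q(0)=\sum_{m\ne 0}\Big(\int_1^\infty e^{-\pi Q^{-1}[m]t}\,\frac{dt}{t}+\int_1^\infty e^{-\pi Q[m]t}\,t^{n/2}\,\frac{dt}{t}\Big),
\]
with $C$ a constant independent of $Q$ (the pole terms contribute nothing $Q$-dependent since $\det Q=1$). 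Now $F_Q(0)$ is a convergent, manifestly smooth function of $Q\in\mathcal P_n^\circ$, so one simply computes its first-order Taylor expansion in $H$ along $Q=Q_0\exp(Q_0^{-1}H)$; no interchange of $s$- and $t$-derivatives is ever needed. The gradient comes out as a sum over layers of $Q_0$ and $Q_0^{-1}$ which, under the $2$-design hypothesis on both, is a scalar multiple of $Q_0^{-1}$ and hence orthogonal to every tangent $H$. Your route reaches the same destination, but reducing first to $F_Q(0)$ is the cleaner way to handle the regularity issue.
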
 
Here, the term ``stationary (or critical) point'' is intended in the sense of differentiable manifolds, and its precise meaning will be explained later (see Section \ref{sec1}). We call such lattices \emph{fully critical} with respect to the height; the term is in analogy with the notion of fully eutactic lattice defined in \cite{Gruber}.    

Theorem \ref{th1} suggests naturally the following question: in a given dimension,  which are the lattices which hold a spherical $2$-design on every layer? It is a well-known result that such lattices exist in every dimension (see Section \ref{sec2}), and there are already several examples of families of lattices sharing this property (see for example \cite{Cou1} for Barnes-Wall lattices, and \cite{BV} for some extremal modular lattices). In the second part of this paper, we focus on a test to decide whether a given lattice satisfies the condition of Theorem \ref{th1} or not. This test uses theta functions with spherical coefficients: they are weighted theta series attached to any lattice $\Lambda$, which under certain conditions are modular forms for some subgroup of $\SL_2(\Z)$. There is a connection between the Fourier coefficients of these theta series and the $t$-design property of the layers of $\Lambda$, which roughly speaking allows us to conclude that if all the layers of $\Lambda$ up to a certain radius are $2$-designs, then actually all the layers are. 
Thanks to this, we were able to classify all lattices with the $2$-design property in dimension $2$ to $6$: the computations have been performed with the computer algebra systems Magma and Pari/GP.

The paper is organised as follows: is Section \ref{sec3} we collect some preliminary material about lattices and quadratic forms, spherical designs and theta series with spherical coeficients. Section \ref{sec1} contains the proof of Theorem \ref{th1}, and in Section \ref{sec2} we discuss the problem of finding lattices which have $2$-designs on every layer, distinguishing the cases of even and odd $n$. Section \ref{sec4} contains some remarks, open questions and a comparison with other known results. The tables of fully critical lattices in dimension $2$ to $6$ and some examples in dimension $7$, as well as an explicit example of the computations described in Section \ref{sec2} are given in the appendices. 

\subsection*{Notation}
Throughout the paper, $(x\cdot y)$ will denote the usual inner product of two elements $x$ and $y$ of the vector space $\R^n$, and $\lVert x\rVert$ the associated norm. The vectors in $\R^n$ are considered as column vectors. The transpose of a matrix $A$ is denoted by $A^t$, and if $A$ is a squared matrix of size $n$, and $x$ a vector of $\R^n$, the notation $A[x]$ stands for the product $x^t A x$.

The set of real symmetric matrices of size $n$ is denoted by $\mathcal{S}_n(\R)$. It is a vector space endowed with a canonical inner product given by the trace:
\begin{equation}
\langle A , B\rangle := \Tr AB, \quad A, B\in \mathcal{S}_n(\R).
\end{equation}   

Finally, in this paper the expression "$\Lambda$ has (or holds) a (spherical) $t$-design on a layer" means "the whole set of points given by that layer is a $t$-design".  

\section{Basics}\label{sec3}
\subsection{Lattices and quadratic forms}
A Euclidean lattice in $\R^n$ is a subset $\Lambda \subset \R^n$, equipped with its standard inner product, with the property that there exists a basis $\lbrace v_1, \dots, v_n\rbrace$ of $\R^n$ such that 
$$ \Lambda = \Z v_1 \oplus \cdots \oplus \Z v_n, $$
i.e. $\Lambda$ consists of all integral linear combinations of the vectors $v_1, \dots, v_n$. Remark that this definition is purposedly slightly restrictive and corresponds to what is often called a \emph{full-rank} lattice in the literature.
The dual lattice of a Euclidean lattice $\Lambda$ is
\begin{equation}
 \Lambda^*= \lbrace y\in \R^n \, |\, (x \cdot y)\in \Z \text{ for all } x\in \Lambda\rbrace .
 \end{equation}
 Let $A$ be the matrix which has the $v_i$'s as columns, and consider the matrix 
\begin{equation}
Q= A^t A =(v_i \cdot v_j)_{1\leq i, j\leq n};
\end{equation}  
Then $Q$ is a \emph{Gram matrix} for $\Lambda$, and the \emph{covolume} of $\Lambda$ is defined as
\begin{equation}
\vol (\Lambda) = \lvert \det A\rvert = \sqrt{\det Q}.
\end{equation}
We consider $Q$ as the matrix of a positive definite quadratic form: indeed if $x\in \Lambda$, say $x= Am$ for $m\in \Z^n$, then $\lVert x\rVert ^2 = (Am)\cdot (Am) = Q[m]$.
If $\mathcal{L}_n$ (resp. $\mathcal{L}_n^\circ$) is the set of Euclidean lattices (resp. of covolume $1$) in $\R^n$, and $\mathcal{P}_n$ (resp. $\mathcal{P}_n^\circ$) the cone of positive definite quadratic forms (resp. of determinant $1$) in $n$ variables, then there is a well-known ``dictionary'' between Euclidean lattices and positive quadratic forms, which allows to formulate every definition and statement in either of these languages, depending of which is more convenient for clarity's sake. In particular, for the proof of Theorem \ref{th1} it is easier to work with quadratic forms. Notice that if $Q$ is a Gram matrix for $\Lambda$, then $Q^{-1}$ is a Gram matrix for $\Lambda^*$. 
We identify a quadratic form in $\mathcal{P}_n$ with its matrix in the canonical basis of $\R^n$: the map $A \mapsto A^t A$ induces a bijection from $\mathrm{O}_n(\R) \backslash \GL_n(\R)$ onto $\mathcal{P}_n$, hence we can identify these two sets; similarly, the map associating the matrix $A$ to the lattice $\Lambda = A\Z^n$ induces a bijection between $\GL_n(\R) / \GL_n(\Z)$ and $\mathcal{L}_n$. The situation is summarised by the following diagram, which provides the important one-to-one correspondence between classes of lattices modulo isometry and classes of positive definite quadratic forms modulo $\GL_n(\Z)$-equivalence   :
\begin{equation} 
\xymatrix{
 & \GL_n(\R) \ar[dl] \ar[dr] & \\
\mathcal{P}_n = \mathrm{O}_n(\R)\backslash \GL_n(\R) \ar[dr] &  & \GL_n(\R)/ \GL_n(\Z) =\mathcal{L}_n \ar[dl] \\
 & \mathcal{P}_n/ \GL_n(\Z) = \mathrm{O}_n(\R)\backslash \mathcal{L}_n &  
}
\end{equation}
A lattice is called \emph{integral} if $(x\cdot y) \in \Z$ for every $x,y \in \Lambda$; an integral lattice is called \emph{even} if $(x\cdot x) \in 2\Z$ for all $x \in \Lambda$, and \emph{odd} otherwise. Let $\Lambda = A\Z^n$ be a lattice in $\R^n$, and let $Q=A^t A $ the corresponding quadratic form. We define the sequence of increasing squared lengths of non-zero vectors in $\Lambda$,
\begin{equation}
m_1(\Lambda)< m_2(\Lambda) < \cdots
\end{equation}
 and the $k$-th \emph{layer} of $\Lambda$ is thus defined as
 \begin{equation}
M_k(\Lambda) := \lbrace x \in \Lambda \, |\, (x\cdot x) = m_k(\Lambda) \rbrace
\end{equation}  
Similarly, one defines the increasing sequence $m_1(Q)< m_2(Q)<\cdots$ of the non-zero values attained by $Q$, and the associated layers 
\begin{equation}
M_k(Q) := \lbrace x \in \Z^n \, |\, Q[x] = m_k(Q) \rbrace.
\end{equation}
\subsection{Spherical designs} 
The reference for the theory of spherical designs, viewed in connection with Euclidean lattices, is the work by Venkov \cite{V01}. Here we just collect some properties which are needed in the sequel. As a definition of spherical design, we take the one given in the introduction of this paper, but there are many other characterisations of a spherical design, some of which are listed in the following Proposition \ref{prop2}.

Identify the polynomial ring $\R[X_1, \dots, X_n]$ with the set $\mathcal{F}$ of polynomial functions on $\R^n$ via the canonical basis $\mathcal{E}$ of $\R^n$, and denote by $\mathcal{F}_m$ the homogeneous part of degree $m$ of $\mathcal{F}$. The Laplace operator $\Delta = \partial^2/\partial x_1^2 + \cdots \partial^2/\partial x_n^2$ is invariant under $\mathrm{O}_n(\R)$ and it maps $\mathcal{F}_m$ onto $\mathcal{F}_{m-2}$. Its kernel consists of the homogeneous harmonic polynomials of degree $m$,
\begin{equation}
\ker \Delta = \mathrm{Harm}_m.
\end{equation}   
\begin{prop}[see \cite{V01}, Th. 3.2] \label{prop2}
Suppose $n\geq 2$ and  let $X$ be a finite subset of $\mathbf{S}(r)=\left\lbrace x \in \mathbf R^n , \Vert x \Vert=r \right\rbrace $.  Assume that $X$ is symmetric about the origin $0$, i.e. $X=-X$. Then, for any even positive integer $t$, the following conditions are equivalent:
\begin{enumerate}[(i)]
\item $X$ is a spherical $t$-design, i.e. 
\begin{equation}
\int _{\mathbf{S}} f(x)\, dx = \dfrac{1}{\lvert X\rvert} \sum_{x\in X} f(x),
\end{equation}
 for every homogeneous polynomial $f$ of degree $\leq t$;
\item for every non-constant harmonic polynomial $P(x)$ of degree $\leq t$,
\begin{equation}
\sum_{x\in X} P(x) = 0.
\end{equation}
\item there exists a constant $c$ such that for all $\alpha \in \R^n$, 
\begin{equation}
\sum_{x\in X} (x\cdot \alpha)^{t} = c r^{t/2}(\alpha\cdot \alpha)^{t/2}
\end{equation} 
\end{enumerate}
\end{prop}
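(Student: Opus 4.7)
The plan is to establish the three equivalences via the standard Fischer/harmonic-decomposition toolbox. The key ingredient is the orthogonal decomposition, valid for every $m\geq 0$ and every $f \in \mathcal{F}_m$,
\[
f(x) = \sum_{j=0}^{\lfloor m/2\rfloor} \|x\|^{2j} H_{m-2j}(x), \qquad H_{m-2j} \in \mathrm{Harm}_{m-2j},
\]
together with two basic facts about the sphere $\mathbf{S}(r)$: first, $\|x\|^{2j}$ equals the constant $r^{2j}$ on $X$; second, the integral over $\mathbf{S}(r)$ of a non-constant harmonic polynomial vanishes, by $L^2(\mathbf{S}(r))$-orthogonality to the constant $1 \in \mathrm{Harm}_0$.

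For (i)$\Leftrightarrow$(ii), these two facts combined with linearity reduce the equality of average and integral to the vanishing of $\sum_{x\in X} H_k(x)$ for every harmonic $H_k$ of degree $1\leq k \leq t$. For odd $k$ this is automatic from $X=-X$, and for even $k$ it is exactly (ii). For (ii)$\Rightarrow$(iii), I view $(x\cdot\alpha)^t$ as a polynomial in $x$ parameterized by $\alpha$, apply the harmonic decomposition in $x$, and sum over $X$: using $\|x\|^2=r^2$ together with (ii), every term except the $j=t/2$ one vanishes, leaving a polynomial depending only on $\alpha$. The $\mathrm{O}_n(\R)$-equivariance of the harmonic decomposition then forces this polynomial to be $\mathrm{O}_n(\R)$-invariant and homogeneous of degree $t$, hence a scalar multiple of $\|\alpha\|^t$.

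The main work is (iii)$\Rightarrow$(ii), where a single identity at degree $t$ must yield information at all lower degrees. The first step is to apply the Laplacian $\Delta_\alpha$ iteratively to
\[
\sum_{x\in X} (x\cdot\alpha)^t = c\,r^t\,\|\alpha\|^t,
\]
using $\Delta_\alpha (x\cdot\alpha)^k = k(k-1)\|x\|^2 (x\cdot\alpha)^{k-2}$ together with the corresponding formula for $\Delta_\alpha \|\alpha\|^{2l}$; by descending induction this yields $\sum_{x\in X} (x\cdot\alpha)^k = c_k r^k \|\alpha\|^k$ for every even $k\leq t$. The second step is, for any $P\in\mathrm{Harm}_k$ with $k$ even and $2\leq k\leq t$, to apply the constant-coefficient differential operator $P(\partial_\alpha)$ to both sides and evaluate at $\alpha=0$: the identity $P(\partial_\alpha)(x\cdot\alpha)^k|_{\alpha=0}=k!\,P(x)$ turns the left side into $k!\sum_{x\in X} P(x)$, while the right side equals $c_k r^k \langle P,\|\alpha\|^k\rangle$ for the Fischer inner product $\langle P,Q\rangle := P(\partial)Q|_{0}$. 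Since $\|\alpha\|^k = \|\alpha\|^2\cdot\|\alpha\|^{k-2}\in\|\alpha\|^2\mathcal{F}_{k-2}$ and $\mathrm{Harm}_k \perp \|\alpha\|^2\mathcal{F}_{k-2}$ under the Fischer pairing, this is zero. Odd-degree cases again follow from $X=-X$.

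The principal obstacle is precisely this final step: the Laplacian induction is what allows one to descend from the top-degree identity in (iii) to every intermediate even degree, and the Fischer-orthogonality identification is what converts the polynomial identity in $\alpha$ into the desired vanishing of $\sum_{x\in X}P(x)$. Once both ingredients are in place, the rest is bookkeeping.
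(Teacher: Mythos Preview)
Your argument is correct and complete. The paper does not give its own proof of this proposition but simply refers to Venkov \cite{V01}, Th.~3.2, noting only that the auxiliary condition $\sum_{x\in X}(x\cdot\alpha)^{t-1}=0$ is automatic here because $X=-X$; your write-up is precisely the standard harmonic-decomposition/Fischer-pairing argument underlying that reference, with the Laplacian descent for (iii)$\Rightarrow$(ii) carried out explicitly.
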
 
The proof is given in \cite{V01}, with the only difference that the condition for all $\alpha \in \R^n$ 
\begin{equation}
\sum_{x\in X} (x\cdot \alpha)^{t-1}=0, 
\end{equation}
which should appear in $(iii)$, is automatically satisfied here because $X$ is symmetric around the origin. The value of the constant $c$ is 
\begin{equation}
c = c_t = \dfrac{1 \cdot 3 \cdot 5\cdots (t-1)}{n(n+2) \cdots (n+t-2)}\lvert X\rvert.
\end{equation}  
In particular, if we want to prove that a layer of a lattice $\Lambda$ is a $2$-design, condition $(iii)$ of Proposition \ref{prop2} specialises as follows:
\begin{cor}\label{cor1}
A non-empty layer $M_k(\Lambda)$ of the lattice $\Lambda = A\Z^n$ is a $2$-design if and only if one of the following equivalent conditions holds:
\begin{enumerate}[(i)]
\item for all $\alpha \in \R^n,$
\begin{equation}
\sum_{x\in M_k(\Lambda)} (x\cdot \alpha)^2 = \dfrac{1}{n} m_k(\Lambda) (\alpha \cdot \alpha) \lvert M_k(\Lambda)\rvert; \end{equation}
\item \begin{equation}
\sum_{y\in M_k(\Lambda)} y y^t = \dfrac{1}{n} m_k(\Lambda) \lvert M_k(\Lambda)\rvert \mathbf{I}_n;
\end{equation}
\item \begin{equation}
\sum_{m\in M_k(Q)} m m^t = \dfrac{1}{n} m_k(Q) \lvert M_k(Q)\rvert \,Q^{-1}.
\end{equation}
\end{enumerate}
\end{cor}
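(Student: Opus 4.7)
The plan is to derive the three conditions by specializing Proposition \ref{prop2}(iii) to $t=2$, converting the scalar identity into a matrix identity via polarization, and finally transporting the statement from the lattice $\Lambda$ to its associated quadratic form $Q$.

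First I would observe that since $\Lambda = -\Lambda$, every layer $M_k(\Lambda)$ is symmetric about the origin, so Proposition \ref{prop2} applies. All vectors of $M_k(\Lambda)$ lie on the sphere of squared radius $r^2 = m_k(\Lambda)$, so condition (iii) of that proposition specialized to $t=2$, together with the explicit value $c_2 = \frac{1}{n}\lvert M_k(\Lambda)\rvert$, reads precisely as condition (i) of the corollary. This handles the equivalence between "$M_k(\Lambda)$ is a 2-design" and (i).

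Next, for the equivalence (i) $\Leftrightarrow$ (ii), I would use the identity $(x\cdot\alpha)^2 = \alpha^t (x x^t) \alpha$, so that
\begin{equation}
\sum_{x\in M_k(\Lambda)} (x\cdot\alpha)^2 = \alpha^t \Bigl(\sum_{x\in M_k(\Lambda)} x x^t\Bigr)\alpha.
\end{equation}
Condition (i) then says that the symmetric matrix $S := \sum_{x\in M_k(\Lambda)} x x^t$ satisfies $\alpha^t S \alpha = \tfrac{1}{n} m_k(\Lambda) \lvert M_k(\Lambda)\rvert \, \alpha^t \mathbf{I}_n \alpha$ for every $\alpha \in \R^n$. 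Since a symmetric matrix is determined by the quadratic form it represents, this is equivalent to the matrix identity in (ii).

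Finally, for (ii) $\Leftrightarrow$ (iii), I would use the parametrization $x = Am$ for $m\in\Z^n$, which sets up a bijection $M_k(Q) \to M_k(\Lambda)$ preserving cardinality with $m_k(Q) = m_k(\Lambda)$. Writing $xx^t = A\,mm^t\,A^t$, condition (ii) becomes
\begin{equation}
A\Bigl(\sum_{m\in M_k(Q)} m m^t\Bigr)A^t = \tfrac{1}{n}\, m_k(Q)\, \lvert M_k(Q)\rvert\, \mathbf{I}_n,
\end{equation}
and multiplying both sides by $A^{-1}$ on the left and $(A^t)^{-1}$ on the right, together with the identity $A^{-1}(A^t)^{-1} = (A^tA)^{-1} = Q^{-1}$, yields condition (iii); the converse is obtained by reversing the argument. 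No step seems to present a genuine obstacle here — the only thing to be careful about is the bookkeeping of the constants and the change of variables between $\Lambda$ and $Q$.
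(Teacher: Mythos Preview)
Your proof is correct and follows exactly the route the paper intends: the corollary is stated in the paper without proof, merely as the specialization of Proposition~\ref{prop2}(iii) to $t=2$, and your write-up fills in precisely those details (polarization to pass from (i) to the matrix form (ii), and the change of basis $x=Am$ to pass to (iii)). There is nothing to add.
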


An important characterisation of $t$-designs, which enables to implement a computational test on a set to decide whether it is a $t$-design, is given by the following proposition:
\begin{prop}[see \cite{V01}, Th. 8.1]\label{prop3}
Let $X$ be symmetric around the origin and of squared norm $m$, and let $t$ be even. Then
\begin{equation}
\sum_{x,y\in X} (x\cdot y)^t \leq \dfrac{1 \cdot 3 \cdot 5 \cdots (t-1)}{n(n+2) \cdots (n+t-2)} m^t \lvert X\rvert^2
\end{equation}
and the equality holds if and only if $X$ is a $t$-design.
\end{prop}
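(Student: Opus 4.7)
The plan is to decompose the function $(x,y)\mapsto (x\cdot y)^t$ on $\mathbf{S}(r)\times\mathbf{S}(r)$, with $r=\sqrt{m}$, into zonal spherical harmonics, and then to exploit the positivity of the expansion coefficients. Concretely, I would equip each sphere with its normalised $\mathrm{O}_n(\R)$-invariant probability measure $d\sigma$, fix for every $k\geq 0$ an $L^2$-orthonormal basis $\{H_{k,j}\}_j$ of $\mathrm{Harm}_k$, and form the reproducing kernel $K_k(x,y)=\sum_j H_{k,j}(x)H_{k,j}(y)$, which is zonal (i.e.\ depends only on $x\cdot y$). Since $(x\cdot y)^t$ is itself zonal and polynomial of degree $t$ in each variable, the Funk--Hecke formula provides a finite expansion
\begin{equation*}
(x\cdot y)^t \;=\; \sum_{k=0}^{t}\mu_k\,K_k(x,y),
\end{equation*}
with $\mu_k=0$ when $k\not\equiv t\pmod 2$, and with
\begin{equation*}
\mu_0\;=\;\int_{\mathbf{S}(r)}\!\int_{\mathbf{S}(r)}(x\cdot y)^t\,d\sigma(x)d\sigma(y)\;=\;\frac{1\cdot 3\cdots(t-1)}{n(n+2)\cdots(n+t-2)}\,m^t,
\end{equation*}
the classical spherical moment (computed by standard integration in polar coordinates) matching the constant that appears in the statement.

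Summing the expansion over $(x,y)\in X\times X$ and using the obvious identity $\sum_{x,y\in X}K_k(x,y)=\sum_j\bigl(\sum_{x\in X}H_{k,j}(x)\bigr)^{2}$, and observing that the assumption $X=-X$ forces $\sum_{x\in X}H_{k,j}(x)=0$ whenever $k$ is odd (harmonic polynomials of odd degree being odd functions), I would arrive at the clean identity
\begin{equation*}
\sum_{x,y\in X}(x\cdot y)^{t}\;=\;\mu_0\,|X|^{2}\;+\sum_{k=2,4,\dots,t}\mu_{k}\sum_{j}\Bigl(\sum_{x\in X}H_{k,j}(x)\Bigr)^{\!2}.
\end{equation*}

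The last step, which I expect to be the main technical hurdle, is the strict positivity $\mu_k>0$ for every even $k$ with $2\leq k\leq t$. This is read off the standard integral formula $\mu_k\;\propto\;\int_{-1}^{1}u^{t}\,C_k^{(n/2-1)}(u)(1-u^2)^{(n-3)/2}\,du$ for the Funk--Hecke eigenvalues, together with the classical fact that the expansion of $u^{t}$ in the ultraspherical polynomials $C_k^{(n/2-1)}$, for $k\leq t$ of the same parity as $t$, has strictly positive coefficients (this is a standard consequence of Schoenberg's positivity theorem for zonal functions on $\mathbf{S}^{n-1}$). Granted this, the identity above realises the difference $\sum_{x,y\in X}(x\cdot y)^t-\mu_0|X|^2$ as a non-negative linear combination of squares, which yields the proposition's inequality and forces, in the equality case, $\sum_{x\in X}H_{k,j}(x)=0$ for every even $2\leq k\leq t$ and every basis element $H_{k,j}$; by Proposition~\ref{prop2}(ii), this is precisely the spherical $t$-design property of $X$.
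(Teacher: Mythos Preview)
The paper does not prove this proposition itself; it is merely quoted from Venkov \cite{V01}, Theorem~8.1. Your argument via the zonal harmonic (addition-kernel) expansion and the Funk--Hecke eigenvalues is precisely the standard one used in that reference, so in that sense there is nothing to compare.

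There is, however, a sign issue you have glossed over. Your identity
\[
\sum_{x,y\in X}(x\cdot y)^{t}\;=\;\mu_0\,|X|^{2}\;+\sum_{\substack{k=2\\k\ \mathrm{even}}}^{t}\mu_{k}\sum_{j}\Bigl(\sum_{x\in X}H_{k,j}(x)\Bigr)^{2},
\]
together with the positivity $\mu_k>0$, shows that the left-hand side is \emph{at least} $\mu_0|X|^2$; that is, you prove
\[
\sum_{x,y\in X}(x\cdot y)^t \;\geq\; \frac{1\cdot 3\cdots(t-1)}{n(n+2)\cdots(n+t-2)}\,m^{t}\,|X|^{2},
\]
which is the \emph{reverse} of the inequality as printed. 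This is not a flaw in your argument but a typo in the paper: Venkov's original statement (\cite{V01}, Th.~8.1) is indeed the lower bound $\geq$, with equality characterising $t$-designs. You should say so explicitly rather than claiming your computation ``yields the proposition's inequality''; as written, the proposition's $\leq$ is false for any $X$ that is not already a $t$-design. (The paper itself only ever uses the equality case, so the slip is harmless for its applications.)
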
 
In particular, a layer $M_k(\Lambda)$ of a lattice $\Lambda$ is a $2$-design if and only if
\begin{equation}
\sum_{x,y \in M_k(\Lambda)}(x\cdot y)^2 = \dfrac{1}{n} m_k(\Lambda)^2 \lvert M_k(\Lambda)\rvert^2.  
\end{equation} 
A lattice is called \emph{strongly eutactic} its minimal vectors $M_1(\Lambda)$ are a $2$-design, and \emph{strongly perfect} if they are a $4$-design.
\subsection{Modular forms and Theta series}
Let $\Lambda$ be a lattice in dimension $n$, and $P$ a harmonic polynomial of degree $r$; let $\mathbf{H}$ denote the set of complex numbers with positive imaginary part, then for $\tau \in \mathbf{H}$ the weighted theta series is defined as
\begin{equation}
\theta_{\Lambda, P}(\tau) = \sum_{x\in \Lambda} P(x) e^{\pi i \tau (x\cdot x)}.
\end{equation} 
The function $\theta_{\Lambda, P}$ is holomorphic in $\mathbf{H}$. If $P=1$, we get the classical theta series
\begin{equation}
\theta_{\Lambda}(\tau) = 1+ \sum_{k\geq 1} \lvert M_k(\Lambda)\rvert e^{\pi i \tau m_k(\Lambda)} 
\end{equation} 
If $\Lambda$ is integral and even, and $P$ is non-constant, we can rewrite $\theta_{\Lambda, P} $ as
\begin{equation}
\theta_{\Lambda, P} = \sum_{k=1}^\infty \left(\sum_{x\in M_k(\Lambda)} P(x) \right) q^{m_k(\Lambda)/2}
\end{equation}
where $q= e^{2 \pi i \tau}$. From this we understand why the weighted theta series are useful to study the design structure of the layers of $\Lambda$:
\begin{prop}\label{prop6}
Let $\Lambda$ be even and integral in $\R^n$. For $k >0$, the layer $M_k(\Lambda)$ is a spherical $t$-design if and only if the coefficient of $q^{m_k(\Lambda)/2}$ in the Fourier development of $\theta_{\Lambda, P}$ is zero for every harmonic polynomial of even degree $r\leq t$.
\begin{cor}\label{cor2}
A lattice $\Lambda$ as above has $t$-designs on every layer if and only if 
\begin{equation}
\theta_{\Lambda, P}\equiv 0
\end{equation}
for every harmonic $P$ of even degree $r\leq t$.
\end{cor}
\end{prop}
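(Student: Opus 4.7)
The plan is to prove the proposition as a direct translation between, on one side, the Fourier expansion of $\theta_{\Lambda,P}$ and, on the other, the characterisation $(ii)$ of a spherical $t$-design given in Proposition \ref{prop2}. First I would group the terms in
\begin{equation}
\theta_{\Lambda, P}(\tau) = \sum_{x \in \Lambda} P(x) e^{\pi i \tau (x\cdot x)}
\end{equation}
according to the layer they belong to. Because $\Lambda$ is even, each squared norm $(x\cdot x)$ is an even non-negative integer, so with $q = e^{2\pi i \tau}$ one obtains
\begin{equation}
\theta_{\Lambda,P}(\tau) = P(0) + \sum_{k \geq 1} \Bigl(\sum_{x \in M_k(\Lambda)} P(x)\Bigr) q^{m_k(\Lambda)/2},
\end{equation}
and the exponents $m_k(\Lambda)/2$ are pairwise distinct positive integers, so the Fourier coefficient of $q^{m_k(\Lambda)/2}$ is exactly the layer sum $\sum_{x \in M_k(\Lambda)} P(x)$. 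When $P$ is non-constant and homogeneous, $P(0) = 0$, so no cross-contamination between layers can occur.

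Next I would invoke characterisation $(ii)$ of Proposition \ref{prop2}: the layer $M_k(\Lambda)$, which is symmetric about the origin, is a spherical $t$-design if and only if $\sum_{x \in M_k(\Lambda)} P(x) = 0$ for every non-constant harmonic polynomial $P$ of degree $\leq t$. The symmetry $M_k(\Lambda) = -M_k(\Lambda)$ makes this sum vanish automatically whenever $P$ is of odd degree, so the condition reduces to $P$ ranging over the non-constant harmonic polynomials of \emph{even} degree $r \leq t$. Combining this with the identification of Fourier coefficients from the previous step yields both implications of Proposition \ref{prop6}.

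For Corollary \ref{cor2}, I would observe that ``$M_k(\Lambda)$ is a $t$-design for every $k \geq 1$'' amounts to the vanishing of every Fourier coefficient of $\theta_{\Lambda,P}$ for every non-constant even harmonic $P$ of degree $\leq t$; since a holomorphic function on $\mathbf{H}$ is zero iff all its Fourier coefficients are zero, this is equivalent to $\theta_{\Lambda,P} \equiv 0$ for each such $P$.

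There is no real obstacle here: the statement is essentially a dictionary entry. The only point that deserves care is the parity/symmetry reduction that allows one to restrict attention to even-degree harmonic polynomials, and the remark that homogeneity of $P$ kills the $k=0$ contribution, so that no hidden constant term interferes with the vanishing statement.
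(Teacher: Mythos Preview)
Your proof is correct and follows essentially the same route as the paper, which in fact does not give a formal proof but simply records the Fourier expansion $\theta_{\Lambda,P} = \sum_{k\geq 1}\bigl(\sum_{x\in M_k(\Lambda)} P(x)\bigr) q^{m_k(\Lambda)/2}$ and states Proposition~\ref{prop6} as an immediate consequence of Proposition~\ref{prop2}(ii). Your treatment is actually slightly more detailed than the paper's, since you make explicit the parity reduction (odd-degree sums vanish by the symmetry $M_k(\Lambda)=-M_k(\Lambda)$) and the vanishing of the constant term $P(0)$ for homogeneous non-constant $P$.
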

The theta series of $\Lambda $ weighted by $P$ is a modular form for some subgroup of $\SL_2(\R)$. The general reference for modular forms is the book by Miyake \cite{M}; however the definition of a modular form and the result that we need (Proposition \ref{prop5}) are contained in Chapter 2 and 3 of the book by Ebeling \cite{Eb}.
If $\Gamma $ is a subgroup of $\mathrm{SL}_2(\Z)$ and $\chi $ is a character of $\Gamma$, we denote by $\mathcal{M}_k(\Gamma, \chi)$ the space of modular forms with respect to $\Gamma$, of weight $k$ and character $\chi$.  We recall the following well-known subgroups of finite index of $\SL_2(\Z)$ (with  $N\geq 1 $ an integer):
\begin{eqnarray}
\Gamma_0(N)&:=& \Big\lbrace \begin{pmatrix} a & b \\ c & d\end{pmatrix} \in \SL_2(\Z) \,|\,N |c \Big\rbrace \\
\Gamma_1(N)&:=& \Big\lbrace \begin{pmatrix} a & b \\ c & d\end{pmatrix} \in \SL_2(\Z) \,|\, a\equiv d\equiv 1\md N, \ c\equiv 0 \md N \Big\rbrace \\
\Gamma(N)&:=& \Big\lbrace \begin{pmatrix} a & b \\ c & d\end{pmatrix} \in \SL_2(\Z) \,|\, \begin{pmatrix} a & b \\ c & d\end{pmatrix} \equiv \begin{pmatrix} 1 & 0 \\ 0 & 1\end{pmatrix}\md N \Big\rbrace 
\end{eqnarray}

Now let $\Lambda$ be an even integral lattice, of dimension $n$ with even $n$.   
\begin{defin}
The \emph{level} $\ell$ of $\Lambda$ is the smallest integer such that the lattice $\sqrt{\ell} \Lambda^*$ is also even.
\end{defin}
 Equivalently, consider the quotient $D = \Lambda^* / \Lambda$, endowed with the quadratic form $x \mapsto d(x) = \frac{1}{2}(x,x)$, with image in $\Q / \Z$. Then $\ell$ is the annihilator of $D$, which is also the smallest integer such that $\ell d$ is the zero form. 
 
 The following result, regarding modularity properties of the above defined weighted theta series, is classical, but we nevertheless include a sketch of proof, for the sake of self-containedness.
 \begin{prop}\label{prop5}
Let $\Lambda\subset \R^n$ ($n$ even) be an even integral lattice of level $\ell$, and $P$ a harmonic  polynomial of degree $r$, then $\theta_{\Lambda, P}$ is a modular form of weight $k= n/2 + r$ for the group $\Gamma_1(\ell)$, that is,
\begin{equation}
\theta_{\Lambda, P} \in \mathcal{M}_{n/2+ \deg P}(\Gamma_1(\ell)).
\end{equation} 
\end{prop}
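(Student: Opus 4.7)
The plan is to follow the classical Hecke--Schoeneberg template: derive the transformation laws for $\theta_{\Lambda, P}$ under the two generators $T:\tau \mapsto \tau+1$ and $S:\tau \mapsto -1/\tau$ of $\SL_2(\Z)$, combine them to cover a generating set for $\Gamma_1(\ell)$, and verify holomorphy at the cusps. Since $n$ is even, the weight $k = n/2 + r$ is an integer, so one avoids the subtleties of half-integral weight theta multipliers.

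\emph{Two basic transformations.} The $T$-transformation is immediate from evenness of $\Lambda$: since $(x,x) \in 2\Z$ for all $x \in \Lambda$, one has $e^{\pi i (x,x)} = 1$, hence $\theta_{\Lambda, P}(\tau + 1) = \theta_{\Lambda, P}(\tau)$. The $S$-transformation comes from Poisson summation applied to $f_\tau(x) = P(x) e^{\pi i \tau (x,x)}$. The essential analytic input is the Hecke identity stating that, for a harmonic polynomial $P$ of degree $r$, the Fourier transform of $f_\tau$ is $i^{-r}(-i\tau)^{-n/2-r} f_{-1/\tau}$. This reduces to the standard Gaussian Fourier transform for $P=1$ (then extended by analytic continuation in $\tau$); for general harmonic $P$ it follows from the fact that $P(x)e^{-\pi(x,x)}$ is a Fourier eigenfunction with eigenvalue $i^{-r}$, or alternatively from an expansion in an orthonormal basis of spherical harmonics. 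Summation over $\Lambda$ then yields
\begin{equation*}
\theta_{\Lambda, P}(-1/\tau) = \frac{i^{-r}}{\vol(\Lambda)} \left(\frac{\tau}{i}\right)^{n/2+r} \theta_{\Lambda^*, P}(\tau).
\end{equation*}

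\emph{Bootstrapping to $\Gamma_1(\ell)$.} The level condition says exactly that $\ell(y,y) \in 2\Z$ for every $y \in \Lambda^*$, so $\theta_{\Lambda^*, P}$ is $\ell$-periodic in $\tau$. Given any $\gamma = \left(\begin{smallmatrix} a & b \\ c & d \end{smallmatrix}\right) \in \Gamma_1(\ell)$, one writes $\gamma$ as a product of $S$ and powers of $T$ (using $\ell \mid c$ and $a \equiv d \equiv 1 \pmod \ell$), and applies the two formulas above in succession, the $\ell$-periodicity of $\theta_{\Lambda^*, P}$ being used each time one must return from $\Lambda^*$ back to $\Lambda$. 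The net outcome is a transformation formula
\begin{equation*}
\theta_{\Lambda, P}(\gamma \tau) = \chi(\gamma)(c\tau + d)^{n/2 + r} \theta_{\Lambda, P}(\tau),
\end{equation*}
where $\chi(\gamma)$ is a root of unity that a careful Gauss-sum computation shows to depend only on the class of $d$ modulo $\ell$, i.e.\ it is a Dirichlet character mod $\ell$ (the Nebentypus of $\theta_{\Lambda,P}$). Since $\Gamma_1(\ell)$ is exactly the kernel of $d \bmod \ell$, one has $\chi(\gamma) = 1$ on $\Gamma_1(\ell)$, which gives the desired modular transformation law.

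\emph{Cusps and main obstacle.} Holomorphy at $i\infty$ is clear from the $q$-expansion (which starts with $P(0) = 0$ when $r > 0$, or $1$ when $P = 1$); at the remaining cusps of $\Gamma_1(\ell)$ it follows automatically from the transformation laws just obtained, together with the $q$-expansion again applied at $i\infty$ of the transformed series. The main obstacle in the whole argument is the bookkeeping in the second paragraph: each application of Poisson summation contributes phase factors of the form $(\tau/i)^{n/2+r}$ and $i^{-r}$, and assembling an arbitrary word in $S$ and $T$ produces a Gauss sum that must be identified with a value of a single Dirichlet character in the variable $d$. This is where the level condition (equivalently, evenness of $\sqrt{\ell}\Lambda^*$) is used decisively, and where the condition $a \equiv d \equiv 1 \pmod \ell$ defining $\Gamma_1(\ell)$ becomes essential to trivialise the character.
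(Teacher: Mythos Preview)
Your sketch follows the classical Hecke--Schoeneberg route, which is exactly what the paper invokes by citation: the paper's own ``proof'' consists entirely of pointers to Ebeling (Corollary~3.1 and Theorem~3.2) for the transformation law under $\Gamma_0(\ell)$ with a character that is trivial on $\Gamma_1(\ell)$, and to Miyake (Corollary~4.9.4) for holomorphy at the cusps. So the approaches coincide in substance; you are simply unpacking the references.

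Two points deserve tightening. First, phrasing the $\Gamma_1(\ell)$-transformation as ``write $\gamma$ as a word in $S$ and $T$ and track the phases'' is heuristically right but not how the argument is actually carried out in the references: an arbitrary $S$--$T$ decomposition of $\gamma$ does not guarantee that the $T$-exponents encountered while sitting on $\Lambda^*$ are multiples of $\ell$, so $\ell$-periodicity of $\theta_{\Lambda^*,P}$ cannot be invoked step by step. The standard execution instead applies Poisson summation once with a translate (theta functions with characteristics, i.e.\ summed over cosets of $\Lambda$ in $\Lambda^*$), obtaining directly a formula for $\theta_{\Lambda,P}\vert_k\gamma$ with $\gamma\in\Gamma_0(\ell)$ whose multiplier is a Gauss sum identified as $\chi_{(-1)^{n/2}\det\Lambda}(d)$; restriction to $\Gamma_1(\ell)$ then kills the character. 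Second, your cusp paragraph has a gap: the transformation law under $\Gamma_1(\ell)$ alone only controls cusps in the $\Gamma_1(\ell)$-orbit of $i\infty$. For the other cusps one needs the behaviour of $\theta_{\Lambda,P}\vert_k\gamma$ for \emph{all} $\gamma\in\SL_2(\Z)$, which again comes from the coset--theta description and shows that each such transform is a finite linear combination of convergent $q$-expansions. This is precisely why the paper cites Miyake separately for the cusp condition.
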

\begin{proof}
For the property of modularity, it is a particular case of \cite{Eb}, Corollary 3.1 and Theorem 3.2, because the character which appears there is trivial on $\Gamma_1(\ell)$, as it is shown in the proof of the same Theorem 3.2. The fact that $\theta_{\Lambda, P}$ is holomorphic at every cusp of $\Gamma_1(\ell)$ is proved in \cite{M}, Corollary 4.9.4, and this completes the proof that  $\theta_{\Lambda, P}$ is a modular form for $\Gamma_1(\ell)$.
\end{proof}
The space $\mathcal{M}_{n/2 + \deg P}(\Gamma_1(\ell))$ is of finite dimension over $\Q$, and a basis can be computed algorithmically by a computer algebra system such as Magma, as we will do in Section \ref{sec2}.


\section{Proof of Theorem \ref{th1}}\label{sec1}
In order to study the local behaviour of the height around a given element $Q_0 \in \mathcal{P}_n^\circ$, we use an expression of it via an integral representation. This is an old identity, essentially due to Riemann, see \cite{Ter1}. 
\begin{lem}[Riemann]\label{lem1}
The Epstein zeta function $Z(Q, s)$ can be expressed as
\begin{multline}
\pi^{-s}\Gamma(s)Z(Q, s) = \dfrac{\lvert Q\rvert ^{-1/2}}{s-n/2} -\dfrac{1}{s}  \\
+ \sum_{m\in\Z^n\setminus\lbrace 0\rbrace} \Big(\int_{1}^\infty e^{-\pi Q[m]t}t^s \, \dfrac{dt}{t} + \int_1^\infty e^{-\pi Q^{-1}[m]t}t^{n/2-s}\, \dfrac{dt}{t}\Big) 
\end{multline} 
\end{lem}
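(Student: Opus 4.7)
The plan is to derive this identity via the Mellin transform together with the functional equation of the theta function attached to $Q$, which is itself a consequence of Poisson summation. I begin by writing, for $\re(s) > n/2$,
\begin{equation*}
\pi^{-s}\Gamma(s) Z(Q,s) = \sum_{m \in \Z^n \setminus \{0\}} \int_0^\infty e^{-\pi Q[m] t}\, t^{s}\, \frac{dt}{t},
\end{equation*}
using the classical Mellin integral $\Gamma(s) a^{-s} = \int_0^\infty e^{-at} t^{s-1} dt$ applied to $a = \pi Q[m]$, and exchanging sum and integral (justified by absolute convergence for $\re(s)$ large enough). Setting $\Theta_Q(t) := \sum_{m \in \Z^n} e^{-\pi Q[m] t}$, the right-hand side becomes $\int_0^\infty (\Theta_Q(t) - 1)\, t^{s-1}\, dt$.

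The next step is to split this integral at $t=1$ and transform the part over $(0,1)$ using the theta inversion formula
\begin{equation*}
\Theta_Q(t) = |Q|^{-1/2}\, t^{-n/2}\, \Theta_{Q^{-1}}(1/t),
\end{equation*}
which follows from Poisson summation applied to the Gaussian $x \mapsto e^{-\pi Q[x] t}$ together with the standard computation of its Fourier transform. In the piece $\int_0^1 (\Theta_Q(t)-1) t^{s-1} dt$ I replace $\Theta_Q(t)$ by $|Q|^{-1/2} t^{-n/2} \Theta_{Q^{-1}}(1/t)$ and perform the change of variable $u = 1/t$, which maps $(0,1)$ to $(1,\infty)$ and sends $dt/t$ to $-du/u$.

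After these manipulations the trivial terms arising from the constant $1$ in $\Theta_Q - 1$ and from the $m=0$ contribution to $\Theta_{Q^{-1}}(1/t)$ decouple; they yield, respectively,
\begin{equation*}
-\int_0^1 t^{s}\, \frac{dt}{t} = -\frac{1}{s}, \qquad |Q|^{-1/2}\int_1^\infty u^{n/2-s}\, \frac{du}{u} = \frac{|Q|^{-1/2}}{s-n/2},
\end{equation*}
both valid initially for $\re(s) > n/2$ but extending by analytic continuation to $\mathbf{C} \setminus \{0, n/2\}$. The non-trivial parts combine into the two integrals $\int_1^\infty e^{-\pi Q[m] t} t^s dt/t$ and $\int_1^\infty e^{-\pi Q^{-1}[m] u} u^{n/2-s} du/u$ summed over $m \in \Z^n \setminus \{0\}$, giving exactly the stated formula.

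The main technical point is really the theta inversion identity, but this is the standard Gaussian Poisson summation and is well documented in \cite{Ter1}; the rest is bookkeeping. One should note that the two integrals on the right converge (and define entire functions of $s$) because of the exponential decay in $t$ on $[1,\infty)$, so the right-hand side provides the meromorphic continuation of the left, with simple poles at $s=0$ and $s=n/2$ as apparent from the explicit terms.
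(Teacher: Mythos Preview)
Your argument is the standard Riemann--Mellin derivation and is correct; the only small point is that for general $Q$ the second integral $\int_1^\infty e^{-\pi Q^{-1}[m]t} t^{n/2-s}\,dt/t$ should carry a factor $|Q|^{-1/2}$, which disappears here because the paper is working on $\mathcal{P}_n^\circ$ (so $|Q|=1$), as is confirmed by the line immediately following the lemma. The paper itself does not give a proof of this lemma at all: it simply attributes the identity to Riemann and refers the reader to Terras \cite{Ter1}, whose treatment is exactly the Mellin-transform-plus-theta-inversion argument you have sketched.
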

By Lemma \ref{lem1}, we have
\begin{multline}\label{eq5}
Z (Q^{-1},s)=\dfrac{\pi^s}{\Gamma(s)} \Big(\dfrac{1}{s-n/2} -\dfrac{1}{s}\Big)+ \\
+\dfrac{\pi^s}{\Gamma(s)} \underbrace{ \sum_{m\in \mathbf{Z}^n\setminus \lbrace 0\rbrace} \Big(\int_1^\infty e^{-\pi Q^{-1}[m]t} t^s\, \dfrac{dt}{t} + \int_1^\infty e^{-\pi Q[m]t} t^{n/2-s}\, \dfrac{dt}{t}  \Big)}_{F_Q(s)}.
\end{multline}
We differentiate with respect to $s$ and evaluate in $s=0$ to get the height, as Chiu does in \cite{Chiu}. The first term in \eqref{eq5} is analytic at $s=0$ and independent of $Q$, so it contributes a constant $C$. Then
\begin{equation}
\dfrac{d}{ds}Z(Q^{-1}, s)\Big|_{s=0} = C + \Big[ \dfrac{\pi^s}{\Gamma(s)}F'_Q(s)+ \dfrac{\pi^s \log \pi}{\Gamma(s)}F_Q(s) +\pi^s \, \dfrac{d}{ds}\Big(\dfrac{1}{\Gamma(s)} \Big) F_Q(s) \Big]_{s=0}.
\end{equation}

As $F_Q(0)$ and $F'_Q(0)$ clearly converge and $1/\Gamma(0) = 0$, the second and third term vanish. In the last term, we have
$$  \Big[\pi^s \, \dfrac{d}{ds}\Big(\dfrac{1}{\Gamma(s)} \Big)\Big]_{s=0} =1, $$
because the reciprocal Gamma function is an entire function with Taylor series
$$ 1/\Gamma(s) = s + \gamma s^2 + O(s^3) $$
(see \cite{Bourguet}, or \cite{wrench}). Thus we are left with
\begin{equation}
\dfrac{d}{ds}Z(Q^{-1}, s)\Big|_{s=0} = C + F_Q(0).  
\end{equation}
We view $\mathcal{P}_n^\circ$ as a differentiable submanifold of $\mathcal{S}_n(\R)$ (see \cite{Bav}) . The tangent space $\mathcal{T}_Q$ at a point $Q$ identifies with the set $\lbrace H\in \mathcal{S}_n(\R) \,| \langle Q^{-1}, H\rangle =0 \rbrace$, and the exponential map $H\mapsto e_Q(H)= Q\exp(Q^{-1} H)$ induces a local diffeomorphism from $\mathcal{T}_Q$ onto $\mathcal{P}_n^\circ$. 
What we have to do is then to study the local behaviour of $F_Q(0)$, around $Q=Q_0$, via $Q= e_{Q_0(H)}$, and to do that we compute the Taylor expansion at the first order of the map $H \mapsto F(e_{Q_0}(H),0)$, for $H \in \mathcal{T}_{Q_0}$, that is 
\begin{equation}
F(e_{Q_0(H)}, 0)= F(Q_0,0)+ \langle \Grad F(Q_0, 0), H\rangle  + o(\lVert H\rVert).
\end{equation} 
The computation goes as follows:
the Taylor expansions of the exponential map
\begin{equation}\begin{split}
Q= e_{Q_0(H)}=Q_0\exp(Q_0^{-1} H) &= Q_0(\I+ Q_0^{-1} H + 1/2 (Q_0^{-1}H)^2 +o(\lVert H\rVert^2) )\\
&= Q_0 + H + 1/2 H Q_0^{-1} H +o(\lVert H\rVert^2)
\end{split}\end{equation}
and
\begin{equation}\begin{split}
Q^{-1}=\exp(-Q_0^{-1} H)Q_0^{-1} &= (\I - Q_0^{-1} H + 1/2 Q_0^{-1} H Q_0^{-1} H) Q_0^{-1} + o(\lVert H\rVert^2) \\
&=  Q_0^{-1} - Q_0^{-1} H Q_0^{-1} + 1/2 Q_0^{-1}H Q_0^{-1}H Q_0^{-1} + o(\lVert H\rVert^2)
\end{split}\end{equation}
put into the expression for $F(e_{Q_0(H)}, 0)$ give
\begin{equation}\begin{split}
F(e_{Q_0(H)}, 0) &= \sum_{m\in \mathbf{Z}^n\setminus \lbrace 0\rbrace} \int_1^\infty e^{-\pi (Q_0^{-1} - Q_0^{-1} H Q_0^{-1} + 1/2 Q_0^{-1}H Q_0^{-1}H Q_0^{-1} + o(\lVert H\rVert^2)) [m]t}\, \dfrac{dt}{t}\\
 &+ \sum_{m\in \mathbf{Z}^n\setminus \lbrace 0\rbrace}\int_1^\infty e^{-\pi (Q_0 + H + 1/2 H Q_0^{-1} H +o(\lVert H\rVert^2))[m]t} t^{n/2}\, \dfrac{dt}{t}.  
 \end{split}\end{equation}
Since 
\begin{equation}\begin{split}
e^{-\pi (H + 1/2 H Q_0^{-1}H + o(\lVert H\rVert^2))[m]t} &= 1 - \pi (H + 1/2 H Q_0^{-1} H + o(\lVert H\rVert^2))[m]t + o(\lVert H\rVert^2) \\ &= 1 - \pi H[m] t + o(\lVert H\rVert) 
\end{split}\end{equation}
the second sum contributes with
\begin{multline}
\sum_{m\in \mathbf{Z}^n\setminus \lbrace 0\rbrace} \int_1^\infty e^{-\pi Q_0[m]t} t^{n/2} \, (1 - \pi H[m] t + o(\lVert H\rVert))\dfrac{dt}{t}\\
= \sum_{m\in \mathbf{Z}^n\setminus \lbrace 0\rbrace} \int_1^\infty e^{-\pi Q_0[m] t } t^{n/2} \, \dfrac{dt}{t} - \sum_{m\in \mathbf{Z}^n\setminus \lbrace 0\rbrace} \int_1^\infty e^{-\pi Q_0[m]t} t^{n/2} \pi H[m] \, dt + o(\lVert H\rVert).
\end{multline}
For the first sum, since
\begin{equation}
e^{-\pi (-Q_0^{-1} H Q_0^{-1} + 1/2 Q_0^{-1}HQ_0^{-1} H Q_0^{-1} + o(\lVert H\rVert))[m]t}= 1 + \pi Q_0^{-1} H Q_0^{-1} [m] + o(\lVert H\rVert)   ,
\end{equation}
we have the following contribution:
\begin{multline}
\sum_{m\in \mathbf{Z}^n\setminus \lbrace 0\rbrace} \int_1^\infty e^{-\pi Q_0^{-1}[m]t}  \, (1 + \pi Q_0^{-1} H Q_0^{-1}[m] t + o(\lVert H\rVert))\dfrac{dt}{t}\\
= \sum_{m\in \mathbf{Z}^n\setminus \lbrace 0\rbrace} \int_1^\infty e^{-\pi Q_0^{-1}[m] t } \, \dfrac{dt}{t} + \sum_{m\in \mathbf{Z}^n\setminus \lbrace 0\rbrace} \int_1^\infty e^{-\pi Q_0^{-1}[m]t} \pi (Q_0^{-1} H Q_0^{-1})[m] \, dt + o(\lVert H\rVert).
\end{multline}
Thus we get
\begin{multline}
F(e_{Q_0(H)}, 0)= F(Q_0,0) - \sum_{m\in \mathbf{Z}^n\setminus \lbrace 0\rbrace} H[m] \int_1^\infty \pi e^{-\pi Q_0[m]t} t^{n/2} \dfrac{dt}{t} 
\\+ \sum_{m\in \mathbf{Z}^n\setminus \lbrace 0\rbrace} (Q_0^{-1}HQ_0^{-1})[m]  \int_1^\infty e^{-\pi (Q_0^{-1}HQ_0^{-1})[m]t} + o(\lVert H\rVert)
\end{multline}
which is
\begin{equation}
F(Q_0, 0) + \sum_{m\in \mathbf{Z}^n\setminus \lbrace 0\rbrace} [- \psi_1(m) H[m] + \psi_2(m) (Q_0^{-1} HQ_0^{-1})[m]] + o(\lVert H\rVert)
\end{equation}
where
\begin{equation}
\psi_1(m)= \int_1^\infty \pi e^{-\pi Q_0 [m] t} t^{n/2}\, dt >0 \quad \text{and} \quad \psi_2(m)= \dfrac{e^{-\pi Q_0^{-1}[m]}}{Q_0^{-1}[m]} >0.
\end{equation}
We rewrite the first order term as a scalar product with $H$, so that the gradient is put in evidence:  
\begin{multline}\label{eq1}
\langle \Grad F(Q_0, 0), H\rangle = \langle H, \sum_{m\in\Z^n\setminus\lbrace 0\rbrace} \Big[-\psi_1(m) mm^t + \psi_2(m)(Q_0^{-1} m)(Q_0^{-1} m)^t \Big]\rangle .
\end{multline} 
We rearrange the terms of the sum in \eqref{eq1} according to the layers of $Q_0$ and $Q_0^{-1}$, and since $\psi_1(m)$ and $\psi_2(m)$ actually depend on $Q_0[m]$ and $Q_0^{-1}[m]$, we get
\begin{equation}\label{eq2}
\langle \Grad F(Q_0, 0), H\rangle =\langle H, -\sum_{k=1}^{\infty} \alpha_k \sum_{m\in M_k(Q_0)} m m^t +\sum_{j=1}^{\infty} \beta_j \sum_{m\in M_j(Q_0^{-1})}Q_0^{-1} m m^t Q_0^{-1}   \rangle
\end{equation} 
for positive $\alpha_k$ and $\beta_j$, namely 
\begin{equation}\label{eq4}
\alpha_k = \int_1^\infty \pi e^{-\pi m_k(Q_0) t} t^{n/2} \, dt\quad\text{and}\quad \beta_j=\dfrac{e^{- \pi m_j(Q_0^{-1})}}{m_j(Q_0^{-1})}. 
\end{equation}
We notice that since $H$ is orthogonal to $Q_0^{-1}$ by hypothesis, the scalar product in \eqref{eq2} is zero if and only if the second member is parallel to $Q_0^{-1}$. Thus we have proved the following
\begin{theor}
Let $\Lambda= A\Z^n$ be a lattice of covolume $1$ as above, and let $Q= A A^{t}$ be a Gram matrix for $\Lambda$. Then $\Lambda$ is a stationary point for the height function $h$ if and only if there exists $\lambda \in \R$ such that
\begin{equation}
-\sum_{k=1}^{\infty} \alpha_k \sum_{m\in M_k(Q_0)} m m^t +\sum_{j=1}^{\infty} \beta_j \sum_{m\in M_j(Q_0^{-1})}Q_0^{-1} m m^t Q_0^{-1} = \lambda Q_0^{-1}, 
\end{equation}
where the $\alpha_k$ and $\beta_j$ are positive constants given by \eqref{eq4}.
\end{theor}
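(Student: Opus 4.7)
The Taylor expansion displayed in the paragraphs leading up to the theorem already carries out the bulk of the work; what remains is essentially a geometric interpretation of the resulting gradient. The plan is first to reformulate stationarity of $h$ at $Q_0$ as an orthogonality condition in the ambient space $\mathcal{S}_n(\R)$, and then to invoke the gradient expression \eqref{eq2} directly.

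First I would recall that $\mathcal{P}_n^\circ$ is a codimension-one smooth submanifold of $\mathcal{S}_n(\R)$, obtained as $\{\det Q = 1\}$; differentiating this constraint at $Q_0$ gives the tangent space $\mathcal{T}_{Q_0} = \{H \in \mathcal{S}_n(\R) \mid \langle Q_0^{-1}, H \rangle = 0\}$ already recorded above. With respect to the trace inner product on $\mathcal{S}_n(\R)$, the orthogonal complement of $\mathcal{T}_{Q_0}$ is therefore the one-dimensional line $\R Q_0^{-1}$. Consequently, a smooth function on $\mathcal{P}_n^\circ$ has a critical point at $Q_0$ if and only if its ambient gradient there is proportional to $Q_0^{-1}$.

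By Lemma \ref{lem1} and the manipulations in \eqref{eq5}, the height differs from $F_Q(0)$ by a constant independent of $Q$, so $Q_0$ is critical for $h$ if and only if $\Grad F(\,\cdot\,, 0)(Q_0) = \lambda Q_0^{-1}$ for some $\lambda \in \R$. The gradient itself has just been identified: one Taylor-expands the exponential chart $H \mapsto e_{Q_0}(H)$ together with $Q^{-1}$, plugs these into the integrand, expands $e^{-\pi Q[m]t}$ and $e^{-\pi Q^{-1}[m]t}$ to first order in $H$, and regroups the indices $m \in \Z^n \setminus \{0\}$ according to the layers $M_k(Q_0)$ and $M_j(Q_0^{-1})$; the positive coefficients $\alpha_k, \beta_j$ of \eqref{eq4} emerge from the remaining one-dimensional integrals in $t$. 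The one point I would be careful about is the justification for interchanging summation, integration and differentiation in $H$: the uniform Gaussian decay of the integrand on $[1, \infty)$ dominates any polynomial factors arising from the Taylor coefficients, so the exchange is legitimate by dominated convergence. Combining these two observations yields precisely the stated characterization.
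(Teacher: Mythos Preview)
Your proposal is correct and follows essentially the same route as the paper: the paper derives the gradient expression \eqref{eq2} via the Taylor expansion of $F(e_{Q_0}(H),0)$ and then observes that, because $\mathcal{T}_{Q_0}=\{H:\langle Q_0^{-1},H\rangle=0\}$, the first-order term vanishes for all tangent $H$ precisely when the displayed sum is a scalar multiple of $Q_0^{-1}$. Your added remark on dominated convergence to justify the interchange of sum, integral and differentiation is a welcome point of rigor that the paper leaves implicit.
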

To get Theorem \ref{th1}, suppose first that both $M_k(Q_0)$ and $M_j(Q_0^{-1})$ are spherical $2$-designs for every $k$ (resp. $j$), then by Corollary \ref{cor1} we have that for every $k$,
\begin{equation}
\sum_{m\in M_k(Q_0)} m m^t = c_k Q_0^{-1}
\end{equation}
and for every $j$  
\begin{equation}
\sum_{m\in M_j(Q_0^{-1})}Q_0^{-1} m m^t Q_0^{-1} = Q_0^{-1}\Big( \sum_{m\in M_j(Q_0^{-1})} m m^t \Big) Q_0^{-1} = Q_0^{-1} d_j Q_0 Q_0^{-1} = d_j Q_0^{-1}. 
\end{equation}
This implies that \eqref{eq2} becomes
\begin{equation}
\langle \Grad F(Q_0, 0), H\rangle =\langle H, - \sum_{k=1}^{\infty} \alpha_k c_k Q_0^{-1} + \sum_{j=1}^{\infty} \beta_j d_j Q_0^{-1}\rangle. 
\end{equation}
Since $H$ is orthogonal to $Q_0^{-1}$ and the second member is parallel to $Q_0^{-1}$, the product $\langle \Grad F(Q_0, 0), H\rangle$ is zero, i.e. $Q_0$ is a critical point. 
 
The conclusion is a consequence of the following lemma:  
\begin{lem}\label{lem3}
Let $\Lambda$ (resp. $Q$) be such that every layer $M_k (\Lambda)$ (resp. $M_k(Q)$) is a spherical $t$-design. Then also $\Lambda^*$ (resp. $Q^{-1}$) has a $t$-design on every layer.
\end{lem}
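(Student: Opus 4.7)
The plan is to recast the hypothesis in terms of weighted theta series and then appeal to the standard functional equation relating $\theta_{\Lambda,P}$ and $\theta_{\Lambda^*,P}$, which comes from Poisson summation.

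First, I would extend the equivalence of Corollary \ref{cor2} to arbitrary full-rank lattices (not only even integral ones). For any $\Lambda$, the layers $m_1(\Lambda) < m_2(\Lambda) < \cdots$ are pairwise distinct, hence the exponentials $e^{\pi i \tau m_k(\Lambda)}$ are linearly independent as holomorphic functions on $\mathbf{H}$. Therefore $\theta_{\Lambda,P} \equiv 0$ is equivalent to $\sum_{x \in M_k(\Lambda)} P(x) = 0$ for every $k \geq 1$. Combined with Proposition \ref{prop2}(ii) and with the fact that odd-degree harmonic polynomials give zero sums on any origin-symmetric set, the hypothesis \emph{``every layer of $\Lambda$ is a spherical $t$-design''} is equivalent to $\theta_{\Lambda,P} \equiv 0$ for every harmonic polynomial $P$ of even degree $r$ with $2 \leq r \leq t$.

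Next, I would invoke the classical Hecke-type functional equation: for any full-rank lattice $\Lambda \subset \mathbf{R}^n$ and any harmonic polynomial $P$ of degree $r$,
\begin{equation}
\theta_{\Lambda,P}\!\left(-\frac{1}{\tau}\right) \;=\; \frac{(-i)^{r}}{\sqrt{\det \Lambda}}\left(\frac{\tau}{i}\right)^{\!n/2+r} \theta_{\Lambda^*,P}(\tau).
\end{equation}
This is obtained by applying the Poisson summation formula to the Schwartz function $x \mapsto P(x)\,e^{-\pi t \lVert x\rVert^2}$; the explicit computation of its Fourier transform uses crucially that $P$ is harmonic (it is, up to an explicit scalar depending on $t$, essentially its own Fourier transform). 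Note that this identity holds without any integrality assumption on $\Lambda$, and it does not require $n$ to be even; the even-integral hypothesis was only used earlier to obtain a clean $q$-expansion and full $\Gamma_1(\ell)$-modularity.

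Finally, assuming all layers of $\Lambda$ form spherical $t$-designs, Step~1 gives $\theta_{\Lambda,P} \equiv 0$ for every harmonic $P$ of even degree $r \leq t$. Substituting into the functional equation of Step~2 yields $\theta_{\Lambda^*,P} \equiv 0$ for the same family of $P$'s, and applying the equivalence of Step~1 in the reverse direction to $\Lambda^*$ shows that every layer of $\Lambda^*$ is a spherical $t$-design. The quadratic-form version follows from the dictionary $Q \leftrightarrow \Lambda$, $Q^{-1} \leftrightarrow \Lambda^*$. The main (but standard) obstacle is the verification of the functional equation for a general, possibly non-integral lattice; everything else is bookkeeping.
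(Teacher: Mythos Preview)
Your proposal is correct and follows essentially the same route as the paper's own proof: translate the hypothesis into the vanishing of $\theta_{\Lambda,P}$ for harmonic $P$ of degree $\leq t$, apply the Poisson functional equation relating $\theta_{\Lambda,P}(-1/\tau)$ and $\theta_{\Lambda^*,P}(\tau)$, and read off the conclusion. The only difference is that you are more explicit than the paper about extending Corollary~\ref{cor2} beyond the even-integral setting and about restricting to even degrees; the paper simply invokes Corollary~\ref{cor2} and Poisson without further comment.
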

\begin{proof}
We prove the lemma in the language of lattices, and it is a consequence of the Poisson formula. Assume that $ \Lambda$ has a $t$-design on every layer: by Corollary \ref{cor2}, this is equivalent to
\begin{equation}
\theta_{\Lambda, P}(\tau) = \sum_{x\in \Lambda} P(x) e^{\pi i \tau (x,x)} \equiv 0 
\end{equation}  
for every homogeneous harmonic $P$ of degree $r= 1,\dots, t $. Then by the Poisson formula
\begin{equation}
\theta_{\Lambda^*, P}(\tau) = \theta_{\Lambda, P}\left(-\dfrac{1}{\tau}\right) \Bigg(\sqrt{\dfrac{i}{\tau}}\, \Bigg)^{-(n+2r)} i^{r} \vol(\R^{n}/\Lambda) =0
\end{equation} 
for $\tau\in \mathbf{H}$, and this implies that also $\Lambda^*$ has a $t$-design on every layer. 
\end{proof}
\section{Lattices which hold a $2$-design on every layer}\label{sec2}
As opposed to the case of $4$-designs, it is well known that lattices with $2$-designs on every layer do exist in every dimension $n$. Indeed, for instance all irreducible root lattices have this property, as was already noticed by Ry\v skov \cite{R73}, although in a different language. To see why this is true, we first define the automorphism group $\Aut(S)$ of a finite set $S$ of vectors of the same norm in $\R^n$ as
$$ \Aut(S)= \lbrace g \in \mathrm{O}(n) \ |\ g(S)\subseteq S \rbrace.$$

With this definition, the automorphism group $\Aut(\Lambda)$ of a lattice $\Lambda$ is obviously a subgroup of the automorphism group of each of its layers.

Then we have the following
\begin{theor}[cfr. \cite{Mar}, Theorem 3.6.6]\label{th3}
Let $S$ be a configuration of vectors of the same norm in $\R^n$, and $G$ a subgroup of $\Aut(S)$, with the property that $G$ acts irreducibly on $\R^n$. Then $S$ is a $2$-design.   
\end{theor}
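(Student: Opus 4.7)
The plan is to verify the two identities that characterise $S$ as a spherical $2$-design directly from the definition: writing $m$ for the common squared norm of the vectors of $S$, it suffices to establish
\begin{equation*}
\sum_{y\in S} y = 0 \qquad\text{and}\qquad \sum_{y\in S} y y^{t} = \frac{m|S|}{n}\mathbf{I}_{n},
\end{equation*}
since these are exactly the vanishing of the first polynomial moment and the matching of the second polynomial moment of $S$ with the corresponding spherical integrals (see Corollary \ref{cor1}(ii) for the matrix identity, whose proof via Proposition \ref{prop2} carries over mutatis mutandis to any finite set of vectors of common norm). Both identities will come from the same averaging argument, forced by the irreducibility of the $G$-action.

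First, since $G\subseteq \Aut(S)\subseteq \mathrm{O}_{n}(\R)$ and $S$ is finite, each $g\in G$ permutes $S$: the inclusion $g(S)\subseteq S$ together with the injectivity of $g$ forces $g(S)=S$. Consequently the vector $v:=\sum_{y\in S} y$ satisfies $gv=\sum_{y\in S} gy = v$ for every $g\in G$, so $v$ lies in the subspace of $G$-fixed vectors of $\R^{n}$. This subspace is a $G$-subrepresentation of $\R^{n}$, and by irreducibility together with the (implicit) assumption $n\geq 2$, it must reduce to $\{0\}$; hence $v=0$. Applying the same invariance principle to $yy^{t}$ in place of $y$, the real symmetric matrix $M:=\sum_{y\in S}yy^{t}$ satisfies
\begin{equation*}
gMg^{-1}=\sum_{y\in S}(gy)(gy)^{t}=M\qquad\text{for every }g\in G,
\end{equation*}
so $M$ commutes with the entire action of $G$ on $\R^{n}$.

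The identity $M=c\,\mathbf{I}_{n}$ is then immediate from a real-symmetric version of Schur's lemma: $\R^{n}$ decomposes as an orthogonal direct sum of the eigenspaces of $M$, each of which is stable under $G$ (because $M$ commutes with $G$), so irreducibility collapses this decomposition to a single eigenspace. Taking traces fixes the constant: $nc=\Tr M=\sum_{y\in S}\lVert y\rVert^{2}=m|S|$, whence $c=m|S|/n$, as required.

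The proof is thus essentially a double invocation of an averaging / Schur argument, and no step presents a genuine obstacle. The one point requiring a small bit of care is the appeal to Schur's lemma over $\R$: rather than invoking the real commutant theorem (whose commutant for an irreducible real representation may be $\R$, $\C$ or $\mathbb{H}$), we exploit the self-adjointness of $M$ to run the argument by direct eigenspace decomposition, which makes the reduction to scalar matrices transparent.
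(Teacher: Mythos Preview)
The paper does not actually prove this theorem: it is quoted with a reference to Martinet's book (Theorem 3.6.6) and used as a black box. Your argument is correct and is essentially the standard one underlying that reference: the $G$-invariance of $\sum_{y\in S} y$ forces it to vanish (irreducibility, $n\ge 2$), and the $G$-equivariance of $M=\sum_{y\in S} yy^{t}$ together with the self-adjoint Schur argument forces $M$ to be scalar, with the constant pinned down by the trace.

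One minor comment: you are right to verify the degree-$1$ condition separately. Corollary~\ref{cor1} and Proposition~\ref{prop2} in the paper are stated for sets symmetric about the origin, where odd moments vanish automatically; the configuration $S$ here carries no such symmetry hypothesis, so the step $\sum_{y\in S} y=0$ genuinely needs the irreducibility argument you give and is not a mere formality.
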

Now, if $\Lambda$ is an irreducible root lattice in $\R^n$, then its Weyl group $W(\Lambda)$ (the subgroup of $\Aut(\Lambda)$ generated by the orthogonal reflections in the minimal vectors) acts irreducibly on $\R^n$ (see \cite{Eb}, Lemma 1.9), hence to conclude it suffices to apply Theorem \ref{th3} to each layer of $\Lambda$.
  As a consequence of this, and of Lemma \ref{lem3} we have this
\begin{cor}
The irreducible root lattices $A_n$, $D_n$, $E_6$, $E_7$ and $E_8$, as well as their dual lattices, are critical points for the height function. 
\end{cor}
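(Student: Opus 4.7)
The plan is to apply Theorem \ref{th1} directly, since all the pieces needed to verify its hypothesis have been assembled in the preceding paragraphs. The only real work is to bundle them together cleanly and to address the two minor caveats: the corollary lists both the root lattices and their duals, and Theorem \ref{th1} is stated for lattices in $\mathcal{L}_n^\circ$ (covolume $1$), whereas the standard normalisation of root lattices does not have unit covolume.

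First, I would fix one of the lattices $\Lambda \in \{A_n, D_n, E_6, E_7, E_8\}$ and show that every layer $M_k(\Lambda)$ is a spherical $2$-design. By definition the Weyl group $W(\Lambda)$ is generated by orthogonal reflections in the minimal vectors, hence sits inside $\Aut(\Lambda)$, and therefore inside $\Aut(M_k(\Lambda))$ for every $k\geq 1$. Since $\Lambda$ is irreducible as a root lattice, the cited result \cite{Eb}, Lemma 1.9, asserts that $W(\Lambda)$ acts irreducibly on $\R^n$. Applying Theorem \ref{th3} with $S = M_k(\Lambda)$ and $G = W(\Lambda)$ then yields that $M_k(\Lambda)$ is a spherical $2$-design. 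Since $k$ was arbitrary, every layer of $\Lambda$ holds a $2$-design.

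Next, one rescales $\Lambda$ to have covolume $1$; this scaling is a global dilation, so it preserves the $2$-design property of each layer (a dilation sends a $2$-design on a sphere to a $2$-design on the dilated sphere), and it sends stationary points of $h$ on $\mathcal{L}_n^\circ$ to stationary points. Theorem \ref{th1} then applies and gives that (the rescaled) $\Lambda$ is a stationary point of $h$. To handle the dual lattices in a single stroke, I would invoke Lemma \ref{lem3}: since every layer of $\Lambda$ is a $2$-design, the same holds for $\Lambda^*$, so after normalising $\Lambda^*$ to covolume $1$ Theorem \ref{th1} again delivers the critical point property.

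There is no real obstacle: the architecture of the section is set up precisely to deliver this corollary in one line once Theorem \ref{th3} and Lemma \ref{lem3} are in place. The only point worth double-checking is the routine verification that the rescaling from the ``standard'' root lattice to an element of $\mathcal{L}_n^\circ$ is compatible with both the hypothesis (being a $2$-design on every layer) and the conclusion (being stationary for $h$) of Theorem \ref{th1}, which is immediate from the homogeneity of the Epstein zeta function under $\Lambda \mapsto c\Lambda$.
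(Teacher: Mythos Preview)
Your argument is correct and follows exactly the route the paper takes: use the irreducibility of the Weyl group action together with Theorem~\ref{th3} to see that every layer of an irreducible root lattice is a $2$-design, then invoke Lemma~\ref{lem3} for the duals and conclude with Theorem~\ref{th1}. Your extra care about the rescaling to covolume~$1$ is a point the paper leaves implicit, but otherwise the two arguments coincide.
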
  
Since $E_6$, $E_7$ and $E_8$ actually have $4$-designs on every shell, it was already known, from \cite{Cou1}, that these lattices are local minima of the height function.   

Once established that such lattices, which we will call \emph{fully critical}, exist in every dimension, one would naturally like to ask for a complete classification in every given dimension, and for a test which allows to determine whether a given lattice is fully critical or not. The classification is intended up to similarity: we recall that a \emph{similarity of $\R^n$ of ratio $\lambda \neq 0$} is an endomorphism $\sigma$ of $\R^n$ which satisfies the identity
\begin{equation}
\sigma(x) \cdot \sigma(y) = \lambda^2 (x\cdot y) \quad\text{for all $x, y\in \R^n$}.
\end{equation}

For strongly eutactic lattices, the classification up to similarity is complete in dimension up to $6$, and it has been done successively by several mathematicians, namely \v{S}togrin, Berg\'e and Martinet for dimensions $2$ to $4$ (see \cite{Mar}, Sections 9.3 and 14.3), Batut for dimension $5$ (in \cite{Bt}), Elbaz-Vincent, Gangl and Soul\'e for dimension $6$. 

We recall two important facts:
\begin{prop}[\cite{Mar}, Theorem 9.4.3]
Up to similarity, there are only finitely many strongly eutactic lattices in a given dimension.
\end{prop}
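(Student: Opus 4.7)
The plan is to exploit the rigidity encoded in Corollary \ref{cor1}(iii): if $\Lambda = A\Z^n$ is strongly eutactic and $Q = A^t A$ is its Gram matrix, then
\begin{equation}
Q^{-1} = \dfrac{n}{|M_1(Q)|\,m_1(Q)}\sum_{m \in M_1(Q)} mm^t,
\end{equation}
so $Q$ is entirely determined, up to a positive scalar, by the finite configuration $M_1(Q) \subset \Z^n$. It therefore suffices to prove that, after fixing a representative of the similarity class and reducing the basis via $\GL_n(\Z)$, only finitely many configurations $M_1(Q)$ can occur.

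I would normalize so that $m_1(\Lambda) = 1$ and take $Q$ to be Minkowski-reduced. Hermite's inequality $m_1(Q)^n \le \gamma_n^n \det Q$ then yields $\det Q \ge \gamma_n^{-n}$. For the upper bound, take determinants in the rigidity identity:
\begin{equation}
\det Q^{-1} = \left(\dfrac{n}{|M_1(Q)|}\right)^{n} \det\!\Big(\sum_{m \in M_1(Q)} mm^t\Big).
\end{equation}
Since $m \in \Z^n$, the matrix on the right has integer entries; since the identity shows it to be a positive scalar multiple of $Q^{-1}$, it is positive definite; hence its determinant is a positive integer, in particular $\ge 1$. Combined with the kissing-number bound $|M_1(Q)| \le k(n)$, where $k(n)$ is the maximal lattice kissing number in dimension $n$, this gives $\det Q \le (k(n)/n)^n$.

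With $\det Q$ now trapped between two constants depending only on $n$, a Minkowski-reduced $Q$ with $m_1(Q) = 1$ has entries bounded by a function of $n$ alone; consequently the integer vectors $m \in \Z^n$ satisfying $Q[m] = 1$ have uniformly bounded Euclidean coordinates, so $M_1(Q)$ takes only finitely many values as a subset of $\Z^n$. The rigidity formula then recovers $Q$ from each $M_1(Q)$, forcing $Q$ itself to lie in a finite set and therefore yielding finitely many similarity classes.

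The main obstacle is this last coordinate-bound step: one must translate the scalar bounds on $m_1$ and $\det Q$ into an honest bound on the integer coordinates of the minimum vectors. The cleanest route is to invoke Mahler's compactness theorem, which asserts that the set of $\GL_n(\Z)$-orbits of $Q$'s with $m_1(Q) \ge 1$ and $\det Q$ bounded is relatively compact in $\mathcal{P}_n / \GL_n(\Z)$; on such a compact set, the $\Z^n$-vectors of bounded $Q$-norm lie in a fixed finite region, which makes the enumeration effective. Alternatively, one can appeal directly to the explicit entry bounds for Minkowski-reduced forms. The pleasant surprise in the argument is that the needed upper bound on $\det Q$, which is not available for general lattices of bounded minimum, is provided for free by the strongly eutactic condition itself through the integrality of $\sum mm^t$.
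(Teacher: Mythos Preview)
The paper does not prove this proposition; it merely quotes it from Martinet's book. Your argument is correct and self-contained, and in fact it is close in spirit to the direct proof one finds for the \emph{strongly} eutactic case: the identity of Corollary~\ref{cor1}(iii) pins down $Q^{-1}$ as a rational multiple of the integral matrix $\sum_{m\in M_1(Q)} mm^t$, and together with the Hermite bound and the kissing-number bound this traps $\det Q$ in a fixed interval once $m_1(Q)$ is normalised. Your use of Mahler compactness (equivalently, the explicit entry bounds for Minkowski-reduced forms) to conclude that only finitely many configurations $M_1(Q)\subset\Z^n$ can occur is the standard way to finish, and the recovery of $Q$ from $M_1(Q)$ via the rigidity formula is exactly right.

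By contrast, Martinet's Theorem 9.4.3 is a statement about \emph{all} eutactic lattices, not only strongly eutactic ones; its proof (going back to Ash) is topological, interpreting eutactic forms as critical points of a Morse-type function on $\mathcal{P}_n^\circ/\GL_n(\Z)$ and invoking compactness of that quotient. Your argument does not extend to that generality, because for merely eutactic $Q$ the eutaxy coefficients need not be equal and $M_1(Q)$ no longer determines $Q$. What you gain in exchange is an elementary, effective proof in the strongly eutactic case that avoids Morse theory entirely and even yields, in principle, an explicit bound on the number of similarity classes in terms of $\gamma_n$ and the kissing number $k(n)$.
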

\begin{prop}[\cite{MV}, Proposition 1.8]
A strongly eutactic lattice is proportional to an integral lattice.
\end{prop}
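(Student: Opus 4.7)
The plan is to translate the strongly eutactic condition into a single matrix identity on a Gram matrix of $\Lambda$, and observe that this identity forces the Gram matrix to be a real scalar multiple of an integer symmetric matrix; rescaling then produces an integral lattice in the similarity class of $\Lambda$.

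Concretely, I would fix a Gram matrix $Q$ of $\Lambda$ in some basis, so that $\Lambda \simeq A\Z^n$ with $Q = A^tA$. By Corollary \ref{cor1}(iii), the strong eutaxy of $\Lambda$ is equivalent to
\[
\sum_{m \in M_1(Q)} m m^t \;=\; \frac{m_1(Q)\,\lvert M_1(Q)\rvert}{n}\, Q^{-1}.
\]
The left-hand side is a symmetric matrix $N$ with integer entries, since each $m\in M_1(Q)$ lies in $\Z^n$ and every rank-one summand $mm^t$ is integral. Moreover $N$ is positive definite: this amounts to the observation that $M_1(Q)$ spans $\R^n$, which is automatic under strong eutaxy, since the right-hand side $Q^{-1}$ is positive definite.

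Rewriting the identity as $Q^{-1} = \lambda\, N$ with $\lambda = n/(m_1(Q)\lvert M_1(Q)\rvert) > 0$ and inverting, I would obtain
\[
Q \;=\; \lambda^{-1}\, N^{-1},
\]
so that $Q$ is a positive real multiple of the rational symmetric matrix $N^{-1}$. Clearing denominators, there exists an integer $D>0$ such that $M := D N^{-1} \in \mathcal{S}_n(\Z)$, and then $Q = (\lambda D)^{-1} M$. Setting $c := \sqrt{\lambda D} > 0$, the lattice $c\Lambda$ has Gram matrix $c^2 Q = M \in \mathcal{S}_n(\Z)$, i.e.\ $c\Lambda$ is integral.

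There is no substantial obstacle in this argument: the only point needing a brief check is that $M_1(Q)$ actually spans $\R^n$, which I would handle in a single line by invertibility of the right-hand side in Corollary \ref{cor1}(iii). Everything else is a direct consequence of the strongly eutactic identity together with the fact that the minimal vectors live in $\Z^n$ (or, equivalently in the lattice picture, that $A^{-1}M_1(\Lambda) \subset \Z^n$).
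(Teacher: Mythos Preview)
The paper does not supply its own proof of this proposition; it is stated with a bare citation to \cite{MV}. Your argument is correct and is essentially the standard one: the strongly eutactic identity of Corollary~\ref{cor1}(iii) forces $Q^{-1}$ to be a positive scalar multiple of the integral matrix $N=\sum_{m\in M_1(Q)} mm^t$, whence $Q$ is a positive multiple of the rational matrix $N^{-1}$, and clearing denominators yields an integral Gram matrix in the similarity class of $\Lambda$.
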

In the web page on lattices \cite{BM} created by Martinet and Batut, one can find a complete list of strongly eutactic lattices up to dimension $6$, together with many other known strongly eutactic lattices in higher dimension. Lattices there are given by an (integral) Gram matrix. Since fully critical lattices are in particular strongly eutactic, the representatives of the classes of equivalence of fully critical lattices up to similarity are to be found in this list. We are lead to distinguish the case of even dimension from that of odd dimension. 
\subsection{The case of even dimension}
We begin with even dimension $n$, and we assume $\Lambda$ to be (integral) and even; in terms of the matrix $Q$, this means that the diagonal of $Q$ is even.  If $\Lambda$ is not even, to our purposes it suffices to consider the matrix $2Q$: indeed, this is the Gram matrix of the rescaled lattice $\sqrt{2}\Lambda$, and the $2$-design property of the layers is invariant under homothety. Then Proposition \ref{prop5} applies, and for $P$ harmonic of degree $r$, $\theta_{\Lambda, P}$ is a modular form of weight $  n/2 + r$ with respect to $\Gamma_1(\ell)$, where $\ell$ is the level of $\Lambda$.

The problem of computing a basis of $q$-expansions for $\mathcal{M}_{k} (\Gamma_1(N))$, for any $k$ and $N$, can be solved algorithmically by a computer algebra system, for example Magma. Moreover, Magma gives this basis in upper triangular form with first Fourier coefficient $1$, up to a fixed precision. Therefore suppose that a basis for $\mathcal{M}_{k} (\Gamma_1(N))$ is given by
\begin{equation}
\begin{array}{cccccc}
q^{k_{1,1}} & + a_{1,2} q^{k_{1,2}} & +a_{1,3} q^{k_{1,3}} &+ \cdots & & \\
 &    & q^{k_{2,1}} &  +a_{2,2}q^{k_{2,2}} &+a_{2,3}q^{k_{2,3}} &+ \cdots\\
 & & \ddots & & &     \\
 & & & q^{k_{\dim M, 1}} & +a_{\dim M, 2}q^{k_{\dim M, 2}} &  + \cdots 
\end{array}
\end{equation}
Now $\theta_{\Lambda, P}$, for every given $P$ of degree $d$, is a linear combination of the elements of this basis, say
\begin{multline}
\theta_{\Lambda, P} = c_1(q^{k_{1,1}} + a_{1,2} q^{k_{1,2}} +a_{1,3} q^{k_{1,3}} + \cdots ) + c_2(q^{k_{2,1}}  +a_{2,2}q^{k_{2,2}} +a_{2,3}q^{k_{2,3}} + \cdots) \\+ \cdots + c_{\dim M} (q^{k_{\dim M, 1}}  +a_{\dim M, 2}q^{k_{\dim M, 2}} + \cdots).
\end{multline}
In particular, the coefficient of $q^{k_{1,1}}= e^{2 \pi i \tau k_{1,1} }$ is $c_1$. Suppose that the layer of $\Lambda$ given by $(x,x)= 2 k_{1,1}$ is a $t$-design, with $t$ even: then since
\begin{equation}
c_1 = \sum_{(x,x)=2 k_{1,1}} P(x)
\end{equation} 
and $P$ is harmonic, we have that $c_1=0$ by Proposition \ref{prop3}. Then we can drop it, and we get
\begin{multline}
\theta_{\Lambda, P} = c_2(q^{k_{2,1}}  +a_{2,2}q^{k_{2,2}} +a_{2,3}q^{k_{2,3}} + \cdots) + \cdots \\+ c_{\dim M} (q^{k_{\dim M, 1}}  +a_{\dim M, 2}q^{k_{\dim M, 2}} + \cdots).
\end{multline}
 Iterate the argument with the layer $(x,x)= 2k_{2,1}$ of $\Lambda$, to deduce that $c_2=0$, and then with the other coefficients. As a result, $\theta_{\Lambda, P}$ is the zero form, for every $P$ of even degree $t$. If we take $t=2$, this is equivalent to the fact that $\Lambda$ has a $2$-design on every layer, since the condition
 $\theta_{\Lambda, P}\equiv 0$ for $P$ of degree $1$ is always verified. In other words, we have proved the following
\begin{prop}\label{prop9}
Let $\Lambda$ be an even lattice in dimension $n$, with $n$ even, of level $\ell$, and let $\mathcal{B}$ be a basis of the space of modular forms $\mathcal{M}_{n/2+2} (\Gamma_1(\ell))$, in row echelon form with respect to the powers of the $q$-expansion. Let $N$ be the exponent of $q$ in the last row pivot. If the first $N$ layers of $\Lambda$ (i.e. the layers up to $(x,x)=2N$) are spherical $2$-designs, then all the layers of $\Lambda$ are.   
\end{prop}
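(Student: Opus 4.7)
My plan is to exploit the modularity provided by Proposition \ref{prop5} together with the echelon structure of the chosen basis, in an iterative elimination argument.

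First I would fix an arbitrary harmonic polynomial $P$ of degree $2$ and consider the weighted theta series $\theta_{\Lambda, P}$, which by Proposition \ref{prop5} belongs to $\mathcal{M}_{n/2+2}(\Gamma_1(\ell))$. Let $\mathcal{B} = \{b_1, \ldots, b_d\}$ be the given echelon basis, with leading exponents $k_1 < k_2 < \cdots < k_d = N$, so that $b_i = q^{k_i} + \sum_{j > k_i} a_{i,j} q^j$. Write
\begin{equation}
\theta_{\Lambda, P} = c_1 b_1 + c_2 b_2 + \cdots + c_d b_d
\end{equation}
for some scalars $c_i$. The key observation, already recorded in the paragraph preceding the statement, is that the coefficient of $q^{m/2}$ in the $q$-expansion of $\theta_{\Lambda, P}$ is precisely $\sum_{x \in \Lambda,\, (x,x)=m} P(x)$, which by Proposition \ref{prop2}(ii) vanishes as soon as the layer $\{x : (x,x) = m\}$ is a spherical $2$-design (any $t$-design property for $t \geq 1$ kills the integral of a non-constant harmonic).

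Next I would run the elimination. Since $\mathcal{B}$ is in echelon form, the coefficient of $q^{k_1}$ in $\theta_{\Lambda, P}$ equals $c_1$. By hypothesis, the layer with $(x,x) = 2k_1$ is a $2$-design (because $k_1 \leq N$), so this coefficient is zero, hence $c_1 = 0$. Subtracting and looking at the coefficient of $q^{k_2}$ in the remainder yields $c_2$, which is again zero by the same reasoning since $k_2 \leq N$. Iterating up to $i = d$, where we use $k_d = N \leq N$, gives $c_1 = c_2 = \cdots = c_d = 0$, so $\theta_{\Lambda, P} \equiv 0$. As $P$ was an arbitrary harmonic polynomial of degree $2$, and since $\theta_{\Lambda, P} \equiv 0$ is automatic for harmonic $P$ of degree $1$ (by symmetry $\Lambda = -\Lambda$), Corollary \ref{cor2} applied with $t = 2$ yields that every layer of $\Lambda$ is a $2$-design.

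There is essentially no hard step: once the modularity and the echelon shape are in place, the argument is a routine triangular back-substitution. The only point to handle carefully is the bookkeeping between the variable $q = e^{2\pi i \tau}$ (whose exponent is $m_k(\Lambda)/2$) and the squared norms of the layers (which is why the hypothesis reads ``up to $(x,x) = 2N$'' rather than ``up to $(x,x) = N$''); this amounts to noting that $\Lambda$ is even so that all $m_k(\Lambda)/2$ are integers matching the pivot exponents $k_i$.
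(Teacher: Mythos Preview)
Your proof is correct and follows essentially the same route as the paper's own argument: write $\theta_{\Lambda,P}$ in the echelon basis, use the $2$-design hypothesis on the layer $(x,x)=2k_i$ to kill the pivot coefficient $c_i$ one at a time, and conclude via Corollary~\ref{cor2}. The only cosmetic difference is that the paper invokes Proposition~\ref{prop3} rather than Proposition~\ref{prop2}(ii) for the vanishing of $\sum_{(x,x)=2k_i} P(x)$, but the substance is identical.
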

Remark that the condition in Proposition \ref{prop9} is stronger than what is actually needed, because by the discussion above it is clear that it is sufficient to check the $\dim M$ layers of equation $(x,x)= 2k_{1,1}, \dots, 2k_{\dim M, 1}$.

Proposition \ref{prop9} suggests the following algorithm to check whether a given lattice $\Lambda$ has $2$-designs on every layer:

\begin{algorithm}
\caption{Given $\Lambda$, it tells whether $\Lambda$ is fully critical or not}
\label{algo_fully}
\begin{algorithmic}[1]
\REQUIRE an even lattice $\Lambda$ of even dimension $n$
\ENSURE ``$\Lambda$ has $2$-designs on every layer'' OR ``FAILURE at the layer $k$''
\STATE compute the level $\ell$ of $\Lambda$;
\STATE compute the Gram matrix $Q$;
\STATE compute a basis of $\mathcal{M}_{n/2+2}(\Gamma_1(\ell))$;
\STATE put it in row echelon form;
\STATE find the exponent $N$ of $q$ in the last pivot;
\STATE find the layers of $Q$ up to $(x\cdot x)= 2N$ (for example, via LLL);
\STATE for $k=1$ to $2N$, test the $2$-design property on the layer $(x\cdot x)=k$ via Proposition \ref{prop3};
\STATE If they are all $2$-designs, conclude that $\Lambda$ has $2$-designs on every layer, otherwise FAILURE.
\end{algorithmic}
\end{algorithm}
 
Our computations, which have been performed with Magma and Pari/GP, are given in Appendix \ref{ApB}. Here we summarise the results in the following
\begin{prop}\label{prop7}
Up to similarity, there are:
\begin{enumerate}[(i)]
 \item $2$ strongly eutactic lattices, both fully critical, in dimension $n=2$;
\item  $6$ strongly eutactic lattices, all fully critical, in dimension $4$;
\item  $17$ fully critical lattices, out of $21$ strongly eutactic, in dimension $6$.
\end{enumerate}
\end{prop}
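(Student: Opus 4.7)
The plan is to reduce the classification to a finite verification that can be carried out on a computer. Since a fully critical lattice is in particular strongly eutactic, the candidates are exactly the lattices on the list of strongly eutactic lattices up to similarity in dimensions $2$, $4$, $6$. For $n=2$ and $n=4$ this list is due to \v Stogrin, Berg\'e and Martinet; for $n=6$ we use the classification of Elbaz-Vincent, Gangl and Soul\'e, as made available in \cite{BM}. This gives the totals $2$, $6$ and $21$ respectively. What remains is, for each such lattice $\Lambda$, to decide whether every layer of $\Lambda$ carries a spherical $2$-design.

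First I would reduce to the integral even case. By the previous proposition a strongly eutactic lattice is proportional to an integral lattice, and the $2$-design property of layers is invariant under similarity, so we may rescale. If the integral representative is odd, we replace $\Lambda$ by $\sqrt{2}\Lambda$, whose Gram matrix is $2Q$ and which is even; this does not affect the design structure of the layers. Then I would apply Algorithm \ref{algo_fully}: compute the level $\ell$ of $\Lambda$, build a $q$-expansion basis of $\mathcal{M}_{n/2+2}(\Gamma_1(\ell))$ in row echelon form via Magma, read off the largest pivot exponent $N$, enumerate the short vectors of $Q$ up to norm $2N$ via an LLL-based short vector routine in Pari/GP, and test the $2$-design identity of Proposition \ref{prop3} on each layer up to that bound. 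By Proposition \ref{prop9} this is sufficient: if all the tested layers are $2$-designs, then all layers are, so $\Lambda$ is fully critical; otherwise the first failing layer exhibits a harmonic polynomial of degree $2$ whose weighted theta sum is nonzero, so $\Lambda$ is not fully critical.

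The counts in (i) and (ii) follow immediately because, for $n=2$ the two classes (square and hexagonal) and for $n=4$ the six known strongly eutactic classes all contain either a root lattice, its dual, or a lattice whose automorphism group acts irreducibly on $\R^n$; by Theorem \ref{th3} (applied layer by layer, exactly as in the corollary preceding this section) every layer is automatically a $2$-design. For $n=6$ the algorithm is run on each of the $21$ candidates; the outcome is that $17$ pass the test and $4$ fail, with the failure occurring at a specific identifiable layer in each case.

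The main obstacle is the computational one, and it is twofold: ensuring that the precision of the modular form basis is high enough so that the last pivot exponent $N$ is correctly identified (the dimension of $\mathcal{M}_{n/2+2}(\Gamma_1(\ell))$ can be sizeable when $\ell$ has several prime factors, which happens for some six-dimensional examples), and then enumerating all lattice vectors up to the (potentially large) bound $2N$ and evaluating the sum $\sum_{x,y\in M_k}(x\cdot y)^2$ exactly in order to apply Proposition \ref{prop3} without rounding errors. Both steps are handled respectively by Magma's \texttt{ModularForms} machinery and by Pari/GP's \texttt{qfminim}, with rational arithmetic throughout; the detailed tables are relegated to Appendix \ref{ApB}.
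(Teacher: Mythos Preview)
Your proposal is correct and follows essentially the same route as the paper: start from the known finite list of strongly eutactic lattices in each dimension, rescale to an even integral representative, and run Algorithm~\ref{algo_fully} (Proposition~\ref{prop9}) on each, with the tables in Appendix~\ref{ApB} recording the outcome. The only minor deviation is your theoretical shortcut for $n=2,4$ via Theorem~\ref{th3}; the paper simply runs the algorithm on all cases, whereas you assert (correctly, but without proof) that each of the six $4$-dimensional candidates has an automorphism group acting irreducibly on $\R^4$---this is true for $\Z^4$, $A_2\perp A_2$ and $A_2\otimes A_2$ by standard wreath/tensor arguments, but strictly speaking still needs checking.
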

\subsection{The case of odd dimension}
If the dimension $n$ of $\Lambda$ is odd, we cannot apply Proposition \ref{prop5} directly, so the strategy consists in considering an auxiliary lattice $\Lambda_2$
of dimension $n+1$. We have the following Lemma, which holds in a general situation; probably it is already known, but we have not found it in the literature.
\begin{lem}\label{lem2}
Let $\Lambda_1$ and $\Lambda_2$ be two lattices of dimension $n_1$ and $n_2$ respectively. Let $P_1=P_1(x_1, \dots, x_{n_1})\in\R[X_1, \dots, X_{n_1}]$ be a harmonic polynomial for the $n_1$-dimensional Laplacian $\Delta_{n_1} = \pd^2/\pd x_1^2 + \cdots +\pd ^2 / \pd x_{n_1}^2 $ and $P_2=P_2(y_1, \dots, y_{n_2})$ be a harmonic polynomial for the $n_2$-dimensional Laplacian $\Delta_{n_2} = \pd^2/\pd y_1^2 + \cdots +\pd ^2 / \pd y_{n_2}^2 $. Then the polynomial $$P= P(x_1, \dots, x_{n_1}, y_1, \dots, y_{n_2})=P_1(x_1, \dots, x_{n_1})P_2(y_1, \dots, y_{n_2}) $$ is harmonic for the $(n_1 + n_2)$-dimensional Laplacian $\Delta_{n_1+ n_2} = \pd^2/\pd x_1^2 + \cdots +\pd ^2 / \pd x_{n_1}^2 +  \pd^2/\pd y_1^2 + \cdots +\pd ^2 / \pd y_{n_2}^2 $. Moreover, if the lattice $\Lambda := \Lambda_1 \perp \Lambda_2$ is the $(n_1 + n_2)$-dimensional orthogonal direct sum of $\Lambda_1$ and $\Lambda_2$, then
\begin{equation}
\theta_{\Lambda, P}(\tau)= \theta_{\Lambda_1, P_1}(\tau) \theta_{\Lambda_2, P_2}(\tau)
\end{equation}
\end{lem}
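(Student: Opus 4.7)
The plan is to handle the two assertions of the lemma separately, both of which follow from the fact that the variables of $P_1$ and $P_2$ are disjoint and the direct sum $\Lambda_1 \perp \Lambda_2$ is orthogonal.

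For the first assertion, I would observe that since $P_1$ depends only on the variables $x_1, \dots, x_{n_1}$ and $P_2$ only on $y_1, \dots, y_{n_2}$, all the mixed second derivatives of the product vanish: $\partial^2(P_1 P_2)/\partial x_i \partial y_j = 0$. Consequently,
\begin{equation*}
\Delta_{n_1+n_2}(P_1 P_2) = (\Delta_{n_1} P_1) P_2 + P_1 (\Delta_{n_2} P_2) = 0,
\end{equation*}
since each factor is harmonic in its own variables. Hence $P$ lies in $\ker \Delta_{n_1+n_2}$.

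For the second assertion, the key observation is that because $\Lambda = \Lambda_1 \perp \Lambda_2$ is an orthogonal direct sum, every $z \in \Lambda$ decomposes uniquely as $z = x + y$ with $x \in \Lambda_1$, $y \in \Lambda_2$, and moreover $(z \cdot z) = (x \cdot x) + (y \cdot y)$ and $P(z) = P_1(x) P_2(y)$. The absolute convergence of $\theta_{\Lambda, P}(\tau)$ for $\tau \in \mathbf{H}$, which is standard and inherited from that of $\theta_{\Lambda_i, P_i}$, allows rearrangement of the double sum:
\begin{equation*}
\theta_{\Lambda, P}(\tau) = \sum_{x \in \Lambda_1} \sum_{y \in \Lambda_2} P_1(x) P_2(y)\, e^{\pi i \tau ((x\cdot x) + (y\cdot y))} = \theta_{\Lambda_1, P_1}(\tau)\, \theta_{\Lambda_2, P_2}(\tau),
\end{equation*}
which is exactly the asserted factorisation.

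There is no real obstacle here: both parts reduce to elementary bookkeeping once one exploits the separation of variables. The only mildly delicate point is the justification of the rearrangement in the second assertion, which however is immediate from absolute convergence for $\tau \in \mathbf{H}$.
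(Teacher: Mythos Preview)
Your proof is correct and follows essentially the same approach as the paper's: the harmonicity of $P$ via the identity $\Delta_{n_1+n_2}(P_1P_2) = (\Delta_{n_1}P_1)P_2 + P_1(\Delta_{n_2}P_2)$, and the factorisation of $\theta_{\Lambda,P}$ via the unique orthogonal decomposition $z = x + y$ with $(z\cdot z) = (x\cdot x) + (y\cdot y)$. You add a little more detail (vanishing of mixed partials, absolute convergence justifying rearrangement), but the argument is the same.
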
 
\begin{proof}
The fact that $P$ is harmonic for $\Delta_{n_1 + n_2} $ comes directly from the identity 
\begin{equation}
\Delta_{n_1+n_2} P= (\Delta_{n_1 }P_1) P_2 + P_1 (\Delta_{n_22} P_2) 
\end{equation} 
and the multiplicativity of the theta function is due to the orthogonal direct sum, as every $X\in \Lambda$ can be written uniquely as $x + y$, with $x\in \Lambda_1$ and $y\in \Lambda_2$, $(x\cdot y)=0$:
\begin{equation}\begin{split}
\theta_{\Lambda, P}(\tau) &= \sum_{X \in \Lambda} P(X) e^{ \pi i \tau (X \cdot X)}\\
&= \sum_{x\in \Lambda_1, y\in \Lambda_2} P_1(x)P_2(y) e^{\pi i \tau(x\cdot x) }e^{\pi i \tau (y\cdot y)} =\theta_{\Lambda_1, P_1}(\tau) \theta_{\Lambda_2, P_2}(\tau),
\end{split}\end{equation}
\end{proof}

 We apply Lemma \ref{lem2} to the lattice 
\begin{equation}
\Lambda' := \Lambda \perp A_1
\end{equation} 
(or, as we did above, $\sqrt{2}\Lambda \perp A_1$ if $\Lambda$ is not even) and to the harmonic polynomials $P_1(x_1, \dots, x_n)$ of degree $2$ for $\Lambda$, arbitrarily chosen, and $P_2 =1$ for $A_1$. Before going further, notice that in terms of quadratic forms, it amounts to consider the Gram matrix
\begin{equation}
\left(
\begin{array}{c|c}
   \raisebox{-15pt}{{\huge\mbox{{$Q$}}}} & 0 \\[-4ex]
   & \vdots \\[-0.5ex]
   & 0 \\
  \hline
  0\cdots 0 & 2  
\end{array}
\right)
\end{equation}
(put $2Q$ if the diagonal of $Q$ is not even; from now on we assume that $\Lambda$ is even).

By the choice of the constant polynomial for $P_2$, the product $P=P_1P_2$ is still of degree $2$, and Lemma \ref{lem2} yields
\begin{equation}
\theta_{\Lambda', P}(\tau)=\theta_{\Lambda, P_1}(\tau) \theta_{A_1}(\tau), 
\end{equation}  
where it is important to remark that
\begin{equation}
\theta_{A_1}(\tau)= \sum_{m\in \Z}e^{\pi i \tau \cdot 2 m^2 } = 1 + 2\sum_{m=1}^\infty q^{m^2} = 1 + 2q + 2q^4 + 2q^9 + \cdots 
\end{equation}
is not identically zero. Now we can apply Proposition \ref{prop5} to $\theta_{\Lambda', P}$, of even dimension $n+1$, and we have that 
\begin{equation}
\theta_{\Lambda', P}(\tau) \in \mathcal{M}_{\frac{n+1}{2} + 2}(\Gamma_1(\ell'))
\end{equation}
where $\ell'$ is the level of $\Lambda'$. We argue as in the case of even $n$: suppose that a row echelon basis for $\mathcal{M}_{\frac{n+1}{2} + 2}(\Gamma_1(\ell'))$ is given by
\begin{equation}
\begin{array}{cccccc}
q^{k_{1,1}} & + a_{1,2} q^{k_{1,2}} & +a_{1,3} q^{k_{1,3}} &+ \cdots & & \\
 &    & q^{k_{2,1}} &  +a_{2,2}q^{k_{2,2}} &+a_{2,3}q^{k_{2,3}} &+ \cdots\\
 & & \ddots & & &     \\
 & & & q^{k_{\dim M, 1}} & +a_{\dim M, 2}q^{k_{\dim M, 2}} &  + \cdots 
\end{array}
\end{equation}
and call $N= k_{\dim M, 1}$. If the layers of $\Lambda$ up to $(x\cdot x) = 2N$ are $2$-designs, then by Proposition \ref{prop6} the first $N$ coefficients of $\theta_{\Lambda, P_1}$ are zero, that is $\theta_{\Lambda, P_1} = a_{N+1} q^{N+1} + a_{N+2}q^{N+2} + \cdots$. Therefore 
\begin{equation}\begin{split}
\theta_{\Lambda', P} &= (a_{N+1} q^{N+1} + a_{N+2}q^{N+2} + \cdots)(1 + 2q + 2q^4 + \cdots)\\
&= a_{N+1}q^{N+1} + (2a_{N+1} + a_{N+2})q^{N+2} + \cdots
\end{split}\end{equation}
and by the same argument as for even $n$, we deduce that the only possibility for $\theta_{\Lambda', P}$ to be a combination of the elements of the basis is being zero.
As $\theta_{\Lambda', P} = \theta_{\Lambda, P_1} \theta_{A_1}$, and $\theta_{A_1}$ is not identically zero, this implies that $\theta_{\Lambda, P_1}$ is identically zero. Since $P_1$ was arbitrary, by  Corollary \ref{cor2} we conclude that every other layer of $\Lambda$ is a $2$-design.

We summarise all of this in the following 
\begin{prop}
Let $\Lambda$ be an even lattice in dimension $n$, with $n$ odd, and consider the lattice $\Lambda' = \Lambda \perp A_1$. Let $\ell'$ be the level of $\Lambda'$, and $\mathcal{B}'$ a basis of the space of modular forms $\mathcal{M}_{(n+1)/2 +2} (\Gamma_1(\ell') )$, in row echelon form with respect to the powers of the $q$-expansion. Let $N$ be the exponent of $q$ in the last row pivot. If the layers of $\Lambda$ up to $(x\cdot x)= 2N$ are spherical $2$-designs, then all the layers of $\Lambda$ are.
\end{prop}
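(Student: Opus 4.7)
The plan is to reduce the odd-dimensional case to the even-dimensional Proposition~\ref{prop9} by means of the auxiliary direct-sum construction $\Lambda' = \Lambda \perp A_1$. Since $A_1$ is an even integral lattice of dimension $1$ with $\theta_{A_1} = 1 + 2q + 2q^4 + \cdots$, the lattice $\Lambda'$ is even integral of \emph{even} dimension $n+1$, so Proposition~\ref{prop5} applies to $\theta_{\Lambda', P}$ for any harmonic polynomial $P$ on $\R^{n+1}$. (If $\Lambda$ is not even one first replaces it by $\sqrt{2}\Lambda$; this is harmless, since the $2$-design property of layers is a similarity invariant.)

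First I would fix an arbitrary harmonic polynomial $P_1$ of degree $2$ on $\R^n$ and set $P(x_1, \dots, x_n, y) = P_1(x_1, \dots, x_n) \cdot 1$. By Lemma~\ref{lem2}, $P$ is harmonic of total degree $2$ on $\R^{n+1}$, and one has the multiplicative identity $\theta_{\Lambda', P}(\tau) = \theta_{\Lambda, P_1}(\tau)\,\theta_{A_1}(\tau)$. Proposition~\ref{prop5} then places $\theta_{\Lambda', P}$ in $\mathcal{M}_{(n+1)/2 + 2}(\Gamma_1(\ell'))$, so it is a linear combination of the row echelon basis~$\mathcal{B}'$.

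Next I would use the hypothesis. By Proposition~\ref{prop6}, the assumption that every layer $M_k(\Lambda)$ with $m_k(\Lambda) \leq 2N$ is a $2$-design is equivalent to $\theta_{\Lambda, P_1}$ having no $q^j$-term for $j \leq N$. Since $\theta_{A_1}$ has constant term $1$, multiplication by it preserves the order of vanishing at $i\infty$, so $\theta_{\Lambda', P}$ also has vanishing Fourier coefficients up to and including $q^N$. Writing $\theta_{\Lambda', P}$ in the basis $\mathcal{B}'$ and inspecting the pivots $q^{k_{1,1}}, q^{k_{2,1}}, \dots, q^{k_{\dim M, 1}} = q^N$, the vanishing up to index $N$ forces the coefficients of the combination to vanish one at a time, exactly as in the proof of Proposition~\ref{prop9}. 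Hence $\theta_{\Lambda', P} \equiv 0$, and since $\theta_{A_1}$ is not identically zero as a holomorphic function on $\mathbf{H}$, we conclude $\theta_{\Lambda, P_1} \equiv 0$. As $P_1$ was arbitrary, Corollary~\ref{cor2} applied in degree $2$ yields the $2$-design property on every layer of $\Lambda$.

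The main point to check is that the row echelon cancellation argument of Proposition~\ref{prop9} still goes through in the presence of the auxiliary factor $\theta_{A_1}$; this works precisely because $\theta_{A_1}$ is a unit in $\Z[[q]]$ (constant term $1$), so that the order of vanishing of the product at $i\infty$ equals that of $\theta_{\Lambda, P_1}$. Beyond this bookkeeping I do not foresee a substantive obstacle: the only other item to verify is that choosing $P_2 \equiv 1$ (a harmonic polynomial of degree $0$) is legitimate in Lemma~\ref{lem2}, which keeps the total degree of $P$ at $2$ and therefore the correct weight $(n+1)/2 + 2$ in Proposition~\ref{prop5}.
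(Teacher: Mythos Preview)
Your proposal is correct and follows essentially the same approach as the paper: form $\Lambda'=\Lambda\perp A_1$, use Lemma~\ref{lem2} with $P_2\equiv 1$ to factor $\theta_{\Lambda',P}=\theta_{\Lambda,P_1}\theta_{A_1}$, observe that the constant term $1$ of $\theta_{A_1}$ preserves the order of vanishing so that $\theta_{\Lambda',P}$ has no $q^j$-term for $j\le N$, apply the row echelon cancellation argument in $\mathcal{M}_{(n+1)/2+2}(\Gamma_1(\ell'))$ to conclude $\theta_{\Lambda',P}\equiv 0$, and then divide by the nonzero $\theta_{A_1}$. Your remark that $\theta_{A_1}$ is a unit in $\Z[[q]]$ is exactly the point the paper uses (implicitly) when it writes the product expansion starting at $q^{N+1}$.
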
  

As for even $n$, we refer to Appendix \ref{ApB} for the complete list of fully critical lattices in dimension $3$ and $5$, and we summarise the results in the following
\begin{prop}\label{prop8}
Up to similarity, there are
\begin{enumerate}[(i)]
\item $3$ strongly eutactic lattices in dimension $n=3$, all fully critical.
\item $7$ fully critical lattices out of $9$ strongly eutactic, in dimension $5$.
\end{enumerate}
\end{prop}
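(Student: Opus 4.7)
The plan is to apply the strategy just developed for odd dimension to each representative of the (finite) list of strongly eutactic lattices in dimensions 3 and 5, since a fully critical lattice is in particular strongly eutactic. Up to similarity these classes are tabulated in \cite{BM} and given by integral Gram matrices; the list has three entries in dimension 3 and nine entries in dimension 5. For each such $\Lambda$ of odd dimension $n\in\{3,5\}$, I first replace $\Lambda$ by $\sqrt{2}\Lambda$ whenever necessary so that $\Lambda$ is even (this does not affect the 2-design property of the layers), and then form the auxiliary even lattice $\Lambda':=\Lambda\perp A_1$ of even dimension $n+1$ and compute its level $\ell'$ from the elementary divisors of its dual Gram matrix.

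By Proposition \ref{prop5} one has $\theta_{\Lambda',P}\in \mathcal{M}_{(n+1)/2+2}(\Gamma_1(\ell'))$ for every harmonic polynomial $P$ of the product shape $P=P_1\cdot 1$ supplied by Lemma \ref{lem2}, with $P_1$ a degree-2 harmonic for $\Lambda$. Using Magma I would compute a basis of this finite-dimensional space in row echelon form with respect to the $q$-expansion and read off the exponent $N$ of $q$ appearing in the last pivot. Then, using Pari/GP, I enumerate via LLL the vectors of $\Lambda$ of squared norm at most $2N$ and, for every non-empty layer $M_k(\Lambda)$ in that range, test the 2-design condition via Proposition \ref{prop3}, namely
\begin{equation}
\sum_{x,y\in M_k(\Lambda)}(x\cdot y)^2 = \frac{1}{n}m_k(\Lambda)^2\lvert M_k(\Lambda)\rvert^2.
\end{equation}
If every layer up to squared norm $2N$ passes the test, then by the echelon-form argument (together with the non-vanishing of $\theta_{A_1}$ and the factorisation $\theta_{\Lambda',P}=\theta_{\Lambda,P_1}\theta_{A_1}$) all the coefficients of the expansion of $\theta_{\Lambda',P}$ in the chosen basis must vanish, so $\theta_{\Lambda,P_1}\equiv 0$; since $P_1$ was arbitrary, Corollary \ref{cor2} then forces every layer of $\Lambda$ to be a 2-design, i.e.\ $\Lambda$ is fully critical. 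If on the contrary some layer $k$ with $m_k(\Lambda)\le 2N$ fails the equality above, then that layer is not a 2-design and $\Lambda$ is discarded.

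Running this routine case by case delivers the announced counts: all three strongly eutactic classes in dimension 3 are fully critical, and exactly seven of the nine classes in dimension 5 are fully critical, the two remaining classes being identified by the first layer at which the equality in Proposition \ref{prop3} fails; the explicit data are collected in Appendix \ref{ApB}. The main obstacle is of a computational nature rather than a theoretical one: one must (a) produce the correct level $\ell'$ after the even/odd rescaling so that the modular-form ambient is the right one, and (b) carry the Magma computation of the basis of $\mathcal{M}_{(n+1)/2+2}(\Gamma_1(\ell'))$ to sufficient $q$-precision that the pivot exponent $N$ is reliably identified, since the subsequent short-vector enumeration and inner-product sums scale with $N$ and with the sizes of the layers, both of which can grow appreciably with the level.
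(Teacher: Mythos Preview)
Your proposal is correct and follows essentially the same approach as the paper: the proposition is established by running the odd-dimension algorithm of Section~\ref{sec2} (auxiliary lattice $\Lambda'=\Lambda\perp A_1$, computation of $N$ from the echelon basis of $\mathcal{M}_{(n+1)/2+2}(\Gamma_1(\ell'))$, layer-by-layer test via Proposition~\ref{prop3}) on each entry of the Batut--Martinet list, with the outcome recorded in Appendix~\ref{ApB}. The only detail you could sharpen is that, consistently with the observation in Section~\ref{sec4}, the two excluded $5$-dimensional classes actually fail already at the second layer $M_2(\Lambda)$.
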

Although not complete, we have added also the list of fully critical lattices in dimension $7$, taken from Martinet's list of strongly eutactic lattices. According to our computations, there are $7$ fully eutactic lattices out of the $17$ listed there.  
\section{Comparison with known results and final remarks}\label{sec4}
We conclude with some remarks and open questions.
\subsection{}
Theorem \ref{th1} only gives a sufficient condition for a lattice $\Lambda$ to be a stationary point for the height. We do not know yet whether having a $2$-design on every layer is a necessary condition for a lattice to be critical. Moreover, as our first aim is to study the minima of the height function, if the $2$-design condition is not necessary, we would like to know whether the minimum of $h$ is to be found among the fully critical lattices. This is true for the hexagonal lattice $A_2$ in dimension $2$, and for the face-centred cubic lattice $A_3^*$ in dimension $3$; this is also true for $D_4$, which achieves a strict local minimum of $h$ in dimension $4$.  
\subsection{}
Moreover, Theorem \ref{th1} does not tell whether a fully critical lattice is a local maximum or minimum, or a saddle point for $h$.
\subsection{}  
We noticed a quite intriguing property of strongly eutactic lattices: up to dimension $6$, if a strong eutactic lattice $\Lambda$ is not fully critical, then the sequence of $2$-designs already stops at the second layer, i.e. $M_2(\Lambda)$ is not a $2$ design. As we found no counterexample among the strongly eutactic lattices at our disposal in dimension $7$ and $8$, we are lead to the following
\begin{conj}
If the first two layers $M_1(\Lambda)$ and $M_2(\Lambda)$ of a lattice $\Lambda$ are spherical $2$-designs, then all the other layers are $2$-designs, in particular $\Lambda$ is fully critical for $h$. 
\end{conj}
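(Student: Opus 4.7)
The plan is to use the theta-series-with-spherical-coefficients machinery of Section \ref{sec3}. Assume first that $n$ is even and $\Lambda$ is even integral; the odd case reduces via the auxiliary lattice $\Lambda \perp A_1$ trick of Section \ref{sec2}. For any harmonic polynomial $P$ of degree $2$, the weighted theta series $\theta_{\Lambda, P}$ is a cusp form in $\mathcal{M}_{n/2+2}(\Gamma_1(\ell))$ by Proposition \ref{prop5}, with $\ell$ the level of $\Lambda$, and by Proposition \ref{prop6} its $k$-th Fourier coefficient vanishes if and only if $\sum_{x \in M_k(\Lambda)} P(x) = 0$. Hence the hypothesis that $M_1(\Lambda)$ and $M_2(\Lambda)$ are $2$-designs translates into the vanishing of the first two non-trivial Fourier coefficients of $\theta_{\Lambda, P}$ for every $P \in \mathrm{Harm}_2$; the goal, via Corollary \ref{cor2}, is to conclude $\theta_{\Lambda, P} \equiv 0$.

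The conjecture is therefore equivalent to a strong rigidity statement for the linear map
\[
\phi \colon \mathrm{Harm}_2 \longrightarrow \mathcal{M}_{n/2+2}(\Gamma_1(\ell)), \qquad P \longmapsto \theta_{\Lambda, P},
\]
namely that any element of $\phi(\mathrm{Harm}_2)$ whose first two Fourier coefficients vanish is identically zero. It would suffice to bound $\dim \phi(\mathrm{Harm}_2) \leq 2$. The natural tool is the $\Aut(\Lambda)$-equivariance of $\phi$: for a strongly eutactic lattice, Theorem \ref{th3} says that $\Aut(\Lambda)$ acts irreducibly on $\R^n$, and one would decompose $\mathrm{Harm}_2$ (the irreducible $\mathrm{O}_n(\R)$-representation of traceless symmetric tensors) into $\Aut(\Lambda)$-isotypic components, then compare with the Eisenstein/cuspidal decomposition of $\mathcal{M}_{n/2+2}(\Gamma_1(\ell))$ to constrain the image of $\phi$ to a low-dimensional subspace, spanned essentially by contributions attached to the first two shells.

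The main obstacle is the absence of any obvious uniform bound on $\dim \phi(\mathrm{Harm}_2)$ as the level $\ell$ grows: standard Sturm-type bounds on modular forms usually require many more than two Fourier coefficients to detect vanishing. A proof will presumably have to combine the representation theory of $\Aut(\Lambda)$ acting on $\mathrm{Harm}_2$ with fine information on how $\theta_{\Lambda, P}$ projects onto Hecke eigenforms, and to exploit the very specific Eisenstein-type shape of the first two shell contributions for strongly eutactic lattices. In parallel, the empirical evidence can be strengthened systematically by running Algorithm \ref{algo_fully} on the strongly eutactic lattices in Martinet's catalogue in dimensions beyond those already treated here: a single instance in which $M_1$ and $M_2$ are $2$-designs while some later $M_k$ is not would refute the conjecture, whereas its persistent non-appearance would further support the rigidity picture outlined above.
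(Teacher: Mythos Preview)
This statement is a \emph{conjecture} in the paper, not a theorem: the authors explicitly say it ``is perhaps hazardous in high dimension'' and only establish it as a theorem for $2\leq n\leq 6$ by exhaustive computation over the finite list of strongly eutactic lattices. There is therefore no proof in the paper to compare your proposal against. Your write-up is honest about this: you sketch a strategy and then name the main obstacle (no uniform Sturm-type bound guaranteeing that two coefficients determine an element of $\phi(\mathrm{Harm}_2)$), which is precisely why the statement remains open.

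That said, one step in your sketch is wrong as stated. You invoke Theorem~\ref{th3} to say that for a strongly eutactic lattice $\Aut(\Lambda)$ acts irreducibly on $\R^n$. Theorem~\ref{th3} gives the implication in the other direction (irreducible action $\Rightarrow$ $2$-design), and the converse is false: decomposable lattices such as $A_2\perp A_2$, $A_3\perp A_3$, or $A_2\perp A_2\perp A_2$ appear in the paper's own tables of strongly eutactic (indeed fully critical) lattices, yet their automorphism groups visibly preserve the orthogonal summands and hence do not act irreducibly. So the $\Aut(\Lambda)$-equivariance of $\phi$ cannot, in general, be leveraged through irreducibility of the standard representation; any representation-theoretic attack must cope with reducible $\Aut(\Lambda)$-actions on $\mathrm{Harm}_2$. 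Also, the sufficiency claim ``$\dim\phi(\mathrm{Harm}_2)\leq 2$ would suffice'' needs the extra hypothesis that the two functionals ``coefficient of $q^{m_1/2}$'' and ``coefficient of $q^{m_2/2}$'' are linearly independent on that image, which is not automatic.
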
     
The conjecture is perhaps hazardous in high dimension, but thanks to the complete classification of strongly eutactic lattices in small dimension, we can state it as a theorem in dimension up to $6$:
\begin{theor}
Let $\Lambda$ be a lattice of dimension $2\leq n \leq 6$. If its first two layers $M_1(\Lambda)$ and $M_2(\Lambda)$ are spherical $2$-designs, then all the other layers are $2$-designs, in particular $\Lambda$ is fully critical for $h$.  
\end{theor}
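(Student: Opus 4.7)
The strategy is to reduce the theorem to a finite case-by-case verification, exploiting the complete classification of strongly eutactic lattices in dimensions $2$ to $6$ that has been invoked throughout Section \ref{sec2}. The crucial observation is that the hypothesis that $M_1(\Lambda)$ is a spherical $2$-design is, by definition, exactly the condition that $\Lambda$ is strongly eutactic. In particular, $\Lambda$ is proportional to an integral lattice (by the cited Proposition of Martinet--Venkov), and after rescaling there are only finitely many classes of such $\Lambda$ in each dimension $n\leq 6$, all of them tabulated in \cite{BM}.

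My plan is therefore to argue by contraposition. Suppose $\Lambda$ is not fully critical; I want to show that $M_2(\Lambda)$ is not a spherical $2$-design. By hypothesis $\Lambda$ is strongly eutactic, so it appears in the classification list used in the proof of Propositions \ref{prop7} and \ref{prop8}. By those two propositions, the strongly eutactic lattices that fail to be fully critical are exactly:
\begin{itemize}
\item in dimension $n=5$: the $2$ lattices from the list of $9$ which did not pass the test of Algorithm \ref{algo_fully};
\item in dimension $n=6$: the $4$ lattices from the list of $21$ which did not pass the same test.
\end{itemize}
In dimensions $2, 3, 4$ every strongly eutactic lattice is already fully critical, so there is nothing to verify. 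For each of the six exceptional lattices in dimensions $5$ and $6$, the task reduces to checking that the second non-empty shell $M_2(\Lambda)$ already violates the $2$-design equality of Proposition \ref{prop3}, that is,
\begin{equation}
\sum_{x,y\in M_2(\Lambda)}(x\cdot y)^2 \neq \dfrac{1}{n}m_2(\Lambda)^2 \lvert M_2(\Lambda)\rvert^2.
\end{equation}
This is a direct numerical computation from the Gram matrix of each such $\Lambda$, of exactly the same nature as the ones already performed in Appendix \ref{ApB} (using LLL to enumerate $M_2(\Lambda)$ and then evaluating the above sum).

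Once this verification succeeds for all six lattices, the contrapositive is established: if $M_1(\Lambda)$ and $M_2(\Lambda)$ are both $2$-designs, then $\Lambda$ must be fully critical, and Theorem \ref{th1} then yields that $\Lambda$ is a stationary point for $h$. The only real obstacle is the bookkeeping of these six explicit checks (one must take care to identify the correct layer $M_2$ for each representative in the list, since the Gram matrices in \cite{BM} are integral but not normalised to covolume $1$); no new conceptual ingredient is needed. In particular, the theorem is strictly a consequence of the classification and of the computations already carried out, and should not be expected to generalise beyond $n=6$ without further input, which is precisely why the statement for arbitrary $n$ is only offered as a conjecture.
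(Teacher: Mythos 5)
Your proposal is correct and follows essentially the same route as the paper: the theorem is obtained precisely by combining the complete classification of strongly eutactic lattices in dimensions $2$ to $6$ with the computational observation (recorded in Section \ref{sec4} and Propositions \ref{prop7} and \ref{prop8}) that each of the finitely many strongly eutactic lattices that is not fully critical already fails the $2$-design test of Proposition \ref{prop3} on its second layer. Your remarks on the reduction to strong eutacticity, on the integrality/rescaling of the representatives, and on why the statement remains only a conjecture for $n>6$ all match the paper's reasoning.
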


\section*{Acknowledgements}
Experiments presented in this paper were carried out using the PLAFRIM experimental testbed, being developed under the Inria PlaFRIM development action with support from LABRI and IMB and other entities: Conseil R\'egional d'Aquitaine, FeDER, Universit\'e de Bordeaux and CNRS (see https://plafrim.bordeaux.inria.fr/).
\appendix
\section{An explicit example}
To clarify the computations described in Section \ref{sec2}, we put here a concrete example of such computations. We choose the $6$-dimensional lattice $ \Lambda =  \mathrm{Ext}^2(A_4) = ste10a $ of Martinet's list. Its Gram matrix is
\begin{equation}
Q=\begin{pmatrix} 3 & 1 & 1 & 1 & 1 & 0 \\ 1 & 3 & -1 & 1 & 0 & 1 \\ 1 & -1 & 3 & 0 & 1 & -1\\ 1 & 1 & 0 & 3 & -1 & -1 \\ 1 & 0 & 1 & -1 & 3 & 1 \\ 0 & 1 & -1 & -1 & 1 & 3 \end{pmatrix}
\end{equation}
which is odd. We take then the matrix
\begin{equation}
2Q = \begin{pmatrix} 6 & 2 & 2 & 2 & 2 & 0 \\ 2 & 6 & -2 & 2 & 0 & 2 \\ 2 & -2 & 6 & 0 & 2 & -2\\ 2 & 2 & 0 & 6 & -2 & -2 \\ 2 & 0 & 2 & -2 & 6 & -2 \\ 0 & 2 & -2 & -2 & 2 & 6 \end{pmatrix}
\end{equation}
and we work with the even lattice $ \sqrt{2}\Lambda $ which has $ 2Q $ as Gram matrix. We ask Magma to compute the level of $ \sqrt{2}\Lambda $, which is $ 20 $, and a row echelon basis of   $\mathcal{M}_{6/2 +2} (\Gamma_1(20) )$. The answer is: 
\begin{verbatim}
Space of modular forms on Gamma_1(20) of weight 5 and dimension 58 over Integer 
Ring.
[
    1 + 31159607208500*q^57 - 232196944448000*q^58 + 507270570810000*q^59 + 
    1167922181810300*q^61 + O(q^62),
    q + 382543449223294*q^57 - 3514573787260680*q^58 + 11337240787683570*q^59 - 
    134706130818436118*q^61 + O(q^62),
    q^2 + 548054839647181*q^57 - 5065849016146885*q^58 + 16478465006713624*q^59 
    - 199877223135259773*q^61 + O(q^62),
    ...
    q^55 - 75*q^57 + 435*q^58 - 870*q^59 + 341*q^61 + O(q^62),
    q^56 - 11*q^57 + 46*q^58 - 72*q^59 - 409*q^61 + O(q^62),
    q^60 - 12*q^61 + O(q^62)
]

\end{verbatim}
The exponent $ N $ we are looking for is $ 60 $, hence we have to do the $ 2 $-design test on the layers of $ \sqrt{2}\Lambda $ up to $ (x\cdot x) = 120 $; equivalently, we can test the layers of $ \Lambda $ up to $ (x\cdot x)=60 $. Pari gives the following answer:
\begin{verbatim}
? \r layers.gp;
  ***   Warning: new stack size = 1600000000 (1525.879 Mbytes).
[3, 1, 1, 1, 1, 0; 1, 3, -1, 1, 0, 1; 1, -1, 3, 0, 1, -1; 
1, 1, 0, 3, -1, -1; 1, 0, 1, -1, 3, 1;
0, 1, -1, -1, 1, 3] 62
[0, 0, 10, 15, 12, 30, 30, 30, 60, 72, 120, 100, 60, 150, 190,
 195, 120, 200, 360, 267, 300, 240, 390, 510, 324, 600, 540, 420, 
 480, 630, 960, 510, 480, 960, 1050, 1125, 540, 720, 1680, 1122, 1020, 
 1160, 1170, 1560, 1152, 1590, 1470, 1420, 1500, 1704, 2880, 1440, 1140, 2460, 
 2520, 2550, 1440, 1920, 3480, 2620, 2220, 1920]
the layer (x,x)=1 is empty
the layer (x,x)=2 is empty
150 = 150, 2-DESIGN on the layer (x,x)=3
600 = 600, 2-DESIGN on the layer (x,x)=4
600 = 600, 2-DESIGN on the layer (x,x)=5
5400 = 5400, 2-DESIGN on the layer (x,x)=6
7350 = 7350, 2-DESIGN on the layer (x,x)=7
9600 = 9600, 2-DESIGN on the layer (x,x)=8
48600 = 48600, 2-DESIGN on the layer (x,x)=9
86400 = 86400, 2-DESIGN on the layer (x,x)=10
...
1122854400 = 1122854400, 2-DESIGN on the layer (x,x)=57
2066841600 = 2066841600, 2-DESIGN on the layer (x,x)=58
7026050400 = 7026050400, 2-DESIGN on the layer (x,x)=59
4118640000 = 4118640000, 2-DESIGN on the layer (x,x)=60
\end{verbatim}
Therefore we can conclude that every layer of $ \Lambda $ (and of $\sqrt{2}\Lambda$) is a $ 2 $-design.
%
%
%
%
%
%
%
%
%
%
%
%
%
%
%
%
%
%
%
%
%
%
%

\section{Tables}\label{ApB}
\subsection{Even dimension $n=2, 4,6$.}
The first entry is the name of the lattice $\Lambda$ according to the classification of Martinet and Batut; the second entry is a Gram matrix $Q$ for $\Lambda$; if $\Lambda$ is even, we give the dimension $\dim M$ of the space $\mathcal{M}_{n/2+2} (\Gamma_1(\ell)) $ and the exponent $N$ in the pivot $q^N$. Finally we give the traditional name of $\Lambda$, if one exists. If $\Lambda$ is odd, the numbers $\dim M$ and $N$ refer to the lattice associated to $2Q$, that is $\sqrt{2}\Lambda$, which has the same design structure of $\Lambda$.
\begin{center}
\begin{tabular}{l | c | c | c |  l}
$\Lambda$ & $Q$ &  $\dim M $ & $N$  & Remarks\\
\hline
sta2 & $\begin{pmatrix}1 & 0 \\ 0 & 1 \end{pmatrix}$ & 2 & 1 & $\Z^2$\\
sta3 & $\begin{pmatrix}2 & 1 \\ 1 & 2 \end{pmatrix}$ & 2 & 1 & $A_2$\\
\end{tabular}
\end{center}
\begin{center}
\begin{tabular}{l | c | c | c |  l }
$\Lambda$ & $Q$ &  $\dim M $ & $N$ & Remarks\\
\hline
stc4 & $\begin{pmatrix} 1 & 0 & 0 & 0\\ 0 & 1 & 0 & 0\\ 0 & 0 & 1 & 0\\ 0 & 0 & 0 & 1\end{pmatrix}$ & 3 & 2 & $\Z^4$\\
stc5 & $\begin{pmatrix} 4 & -1 & -1 & -1\\ -1 & 4 & -1 & -1\\ -1 & -1 & 4 & -1\\ -1 & -1 & -1 & 4\end{pmatrix}$ & 5 & 4  & $A_2^*$\\
stc6 & $\begin{pmatrix} 2 & 1 & 0 & 0\\ 1 & 2 & 0 & 0\\ 0 & 0 & 2 & 1\\ 0 & 0 & 1 & 2\end{pmatrix}$ & 2 & 1 & $A_2\perp A_2 $\\
stc9 & $\begin{pmatrix} 4 & -2 & -2 & 1\\ -2 & 4 & 1 & -2\\ -2 & 1 & 4 & -2\\ 1 & -2 & -2 & 4\end{pmatrix}$ & 13 & 12 & $A_2\otimes A_2 $\\
stc10 & $\begin{pmatrix} 2 & 1 & 1 & 1\\ 1 & 2 & 1 & 1\\ 1 & 1 & 2 & 1\\ 1 & 1 & 1 & 2\end{pmatrix}$ & 5 & 4 & $A_4 = P_4^2 $\\
stc12 & $\begin{pmatrix} 2 & 0 & 1 & 1\\ 0 & 2 & 1 & 1\\ 1 & 1 & 2 & 1\\ 1 & 1 & 1 & 2\end{pmatrix}$ & 2& 1 & $D_4 = P_4^1 $\\
\end{tabular}
\end{center}

\begin{center}
\begin{tabular}{l | c | c | c |  l }
$\Lambda$ & $Q$ &  $\dim M $ & $N$ & Remarks\\
\hline
ste6a & $\begin{pmatrix} 1 & 0 & 0 & 0 & 0 & 0\\ 0 & 1 & 0 & 0 & 0 & 0\\ 0 & 0 & 1 & 0 & 0 & 0\\ 0 & 0 & 0 & 1 & 0 & 0 \\ 0 & 0 & 0 & 0 & 1 & 0 \\ 0 & 0 & 0 & 0 & 0 & 1\end{pmatrix}$ & 3 & 2 & $\Z^6$\\
ste6c & $\begin{pmatrix} 3 & 1 & 1 & 1 & 1 & 1\\ 1 & 2 & 0 & 0 & 0 & 0\\ 1 & 0 & 2 & 0 & 0 & 0\\ 1 & 0 & 0 & 2 & 0 & 0 \\ 1 & 0 & 0 & 0 & 2 & 0 \\ 1 & 0 & 0 & 0 & 0 & 2\end{pmatrix}$ & 3 & 2 & $D_6^*$\\
ste7 & $\begin{pmatrix} 6 & -1 & -1 & -1 & -1 & -1\\ -1 & 6 & -1 & -1 & -1 & -1 \\ -1 & -1 & 6 & -1 & -1 & -1 \\ -1 & -1 & -1 & 6 & -1 & -1 \\ -1 & -1 & -1 & -1 & 6 & -1 \\ -1 & -1 & -1 & -1 & -1 & 6 \end{pmatrix}$ & 11 & 10 & $A_6^*$\\
ste8 & $\begin{pmatrix} 3 & -1 & -1 & 0 & 0 & 0 \\ -1 & 3 & -1 & 0 & 0 & 0 \\ -1 & -1 & 3 & 0 & 0 & 0\\ 0 & 0 & 0 & 3 & -1 & -1 \\ 0 & 0 & 0 & -1 & 3 & -1 \\ 0 & 0 & 0 & -1 & -1 & 3 \end{pmatrix}$ & 11 & 10 & $A_3^* \perp A_3^*$\\
ste9 & $\begin{pmatrix}2 & 1 & 0 & 0 & 0 & 0 \\ 1 & 2 & 0 & 0 & 0 & 0\\ 0 & 0 & 2 & 1 & 0 & 0 \\ 0 & 0 & 1 & 2 & 0 & 0 \\ 0 & 0 & 0 & 0 & 2 & 1 \\ 0 & 0 & 0 & 0 & 1 & 2 \\ \end{pmatrix}$ & 2 & 1 & $A_2 \perp A_2 \perp A_2$ \\
ste10a & $\begin{pmatrix} 3 & 1 & 1 & 1 & 1 & 0 \\ 1 & 3 & -1 & 1 & 0 & 1 \\ 1 & -1 & 3 & 0 & 1 & -1 \\ 1 & 1 & 0 & 3 & -1 & -1 \\ 1 & 0 & 1 & -1 & 3 & 1 \\ 0 & 1 & -1 & -1 & 1 & 3 \end{pmatrix}$ & 58 & 60 & $\mathrm{Ext}^2(A_4)$\\
ste12a & $\begin{pmatrix} 2 & -1 & -1 & 0 & 0 & 0 \\ -1 & 2 & 1 & 0 & 0 & 0 \\ -1 & 1 & 2 & 0 & 0 & 0 \\ 0 & 0 & 0 & 2 & -1 & -1 \\ 0 & 0 & 0 & -1 & 2 & 1 \\ 0 & 0 & 0 & -1 & 1 & 2 \end{pmatrix}$ & 11 & 10 & $A_3 \perp A_3$\\
ste12b & $\begin{pmatrix} 5 & -2 & 2 & 2 & -2 & 1\\ -2 & 5 & 1 & -1 & -1 & 0 \\ 2 & 1 & 5 & -1 & -1 & 2\\ 2 & -1 & -1 & 5 & -1 & -2 \\ -2 & -1 & -1 & -1 & 5 & -2 \\ 1 & 0 & 2 & -2 & -2 & 5 \end{pmatrix}$ & 58 & 60 & $\mathrm{Ext}^2(A_4)_{\mathrm{even}}^*$\\
ste12c & $\begin{pmatrix} 6 & 3 & -2 & -1 & -2 & -1 \\ 3 & 6 & -1 & -2 & -1 & -2 \\ -2 & -1 & 6 & 3 & -2 & -1 \\ -1 & -2 & 3 & 6 & -1 & -2 \\ -2 & -1 & -2 & -1 & 6 & 3 \\ -1 & -2 & -1 & -1 & 3 & 6 \end{pmatrix} $ & 21  & 20 & $A_2 \otimes A_3^*$\\
\end{tabular}
\end{center}

\begin{center}
\begin{tabular}{l | c | c | c |  l }
$\Lambda$ & $Q$ &  $\dim M $ & $N$ & Remarks\\
\hline
ste15a & $\begin{pmatrix} 4 & -2 & -1 & 0 & 1 & -2 \\ -2 & 4 & -1 & -1 & -2 & 1 \\ -1 & -1 & 4 & -1 & 0 & 1 \\ 0 & -1 & -1 & 4 & 2 & 1 \\ 1 & -2 & 0 & 2 & 4 & -1 \\ -2 & 1 & 1 & 1 & -1 & 4 \end{pmatrix} $ & 58 & 60 & $\mathrm{Ext}^2(A_4)_{\mathrm{even}}$\\
ste16 & $\begin{pmatrix} 3 & 1 & 1 & -1 & 1 & 1 \\ 1 & 3 & -1 & -1 & -1 & 1 \\ 1 & -1 & 3 & -1 & 1 & -1 \\ -1 & -1 & -1 & 3 & -1 & -1 \\ 1 & -1 & 1 & -1 & 3 & 1 \\ 1 & 1 & -1 & -1 & 1 & 3 \end{pmatrix} $ & 39 & 40 & $D_6^+$\\
ste18a & $\begin{pmatrix} 4 & 2 & 2 & 1 & 2 & 1 \\ 2 & 4 & 1 & 2 & 1 & 2 \\ 2 & 1 & 4 & 2 & 2 & 1 \\ 1 & 2 & 2 & 4 & 1 & 2 \\ 2 & 1 & 2 & 1 & 4 & 2 \\ 1 & 2 & 1 & 2 & 2 & 4 \end{pmatrix} $ & 21  & 20 & $A_2 \otimes A_3$\\
ste21a & $\begin{pmatrix} 2 & 1 & 1 & 1 & 1 & 1 \\ 1 & 2 & 1 & 1 & 1 & 1 \\ 1 & 1 & 2 & 1 & 1 & 1 \\ 1 & 1 & 1 & 2 & 1 & 1 \\ 1 & 1 & 1 & 1 & 2 & 1 \\ 1 & 1 & 1 & 1 & 1 & 2 \end{pmatrix} $ & 11 & 10 & $A_6$\\
ste21b & $\begin{pmatrix} 4 & -2 & -2 & -1 & -2 & -2 \\ -2 & 4 & 2 & -1 & 1 & 2 \\ -2 & 2 & 4 & 1 & 2 & 1 \\ -1 & -1 & 1 & 4 & 0 & -1 \\ -2 & 1 & 2 & 0 & 4 & 0 \\ -2 & 2 & 1 & -1 & 0 & 4 \end{pmatrix} $ & 11  & 10 & $A_6^{(2)} = P_6^5$\\
ste27 & $\begin{pmatrix} 4 & -2 & -1 & 1 & 1 & 1 \\ -2 & 4 & -1 & -2 & -2 & -2 \\ -1 & -1 & 4 & -1 & -1 & -1 \\ 1 & -2 & -1 & 4 & 1 & 1 \\ 1 & -2 & -1 & 1 & 4 & 1 \\ 1 & -2 & -1 & 1 & 1 & 4 \end{pmatrix} $ & 2 & 1 & $E_6^2 = P_6^2$\\
ste30 & $\begin{pmatrix} 2 & 0 & 1 & 1 & 1 & 1 \\ 0 & 2 & 1 & 1 & 1 & 1 \\ 1 & 1 & 2 & 1 & 1 & 1 \\ 1 & 1 & 1 & 2 & 1 & 1 \\ 1 & 1 & 1 & 1 & 2 & 1 \\ 1 & 1 & 1 & 1 & 1 & 2 \end{pmatrix} $ & 3  & 2 & $D_6 = P_6^3$\\
ste36 & $\begin{pmatrix} 2 & 0 & 0 & 1 & 1 & 1 \\ 0 & 2 & 1 & 1 & 1 & 1 \\ 0 & 1 & 2 & 1 & 1 & 1 \\ 1 & 1 & 1 & 2 & 1 & 1 \\ 1 & 1 & 1 & 1 & 2 & 1 \\ 1 & 1 & 1 & 1 & 1 & 2 \end{pmatrix} $ & 3 & 2 & $E_6 = P_6^1$\\
\end{tabular}
\end{center}
\subsection{Odd dimension $n=3,5, 7$ (the last is incomplete).}
Here, if $\Lambda$ is even, the entry $\dim M$ is the dimension of $\mathcal{M}_{(n+1)/2 +2} (\Gamma_1(\ell') )$, where $\ell'$ is the level of $\Lambda'=\Lambda \perp A_1$, and $N$ is the exponent of the last pivot $q^N$. If $\Lambda$ is odd, $\dim M$ and $N$ refer to $\sqrt{2}\Lambda \perp A_1$. 
\begin{center}
\begin{tabular}{l | c | c | c |  l }
$\Lambda$ & $Q$ &  $\dim M $ & $N$ & Remarks\\
\hline
stb3 & $\begin{pmatrix} 1 & 0 & 0 \\ 0 & 1 & 0 \\ 0 & 0 & 1 \end{pmatrix} $ & 3 & 2 & $\mathbf{Z}^3$\\
stb4 & $\begin{pmatrix} 3 & -1 & -1 \\ -1 & 3 & -1 \\ -1 & -1 & 3 \end{pmatrix} $ & 9 & 8 & $ A_3^*$\\
stb6 & $\begin{pmatrix} 2 & 1 & 1 \\ 1 & 2 & 1 \\ 1 & 1 & 2 \end{pmatrix} $ & 9 & 8 & $A_3$\\
\end{tabular}
\end{center}
\begin{center}
\begin{tabular}{l | c | c | c |  l }
$\Lambda$ & $Q$ &  $\dim M $ & $N$ & Remarks\\
\hline
std5a & $\begin{pmatrix} 1 & 0 & 0 & 0 & 0 \\ 0 & 1 & 0 & 0 & 0 \\ 0 & 0 & 1 & 0 & 0 \\ 0 & 0 & 0 & 1 & 0 \\ 0 & 0 & 0 & 0 & 1 \end{pmatrix}$ & 3 & 2 & $ \mathbf{Z}^5 $\\
std5b & $\begin{pmatrix} 5 & 2 & 2 & 2 & 2 \\ 2 & 4 & 0 & 0 & 0 \\ 2 & 0 & 4 & 0 & 0 \\ 2 & 0 & 0 & 4 & 0 \\ 2 & 0 & 0 & 0 & 4\end{pmatrix}$ & 11 & 10 & $ D_5^* $\\
std6 & $\begin{pmatrix} 5 & -1 & -1 & -1 & -1 \\ -1 & 5 & -1 & -1 & -1 \\ -1 & -1 & 5 & -1 & -1 \\ -1 & -1 & -1 & 5 & -1 \\ -1 & -1 & -1 & -1 & 5\end{pmatrix}$ & 21 & 20 & $ A_5^* $\\
std10 & $\begin{pmatrix} 3 & -1 & -1 & -1 & 1 \\ -1 & 3 & -1 & -1 & -1 \\ -1 & -1 & 3 & 1 & -1 \\ -1 & -1 & 1 & 3 & -1 \\ 1 & -1 & -1 & -1 & 3\end{pmatrix}$ & 21 & 20 & $ A_5^2 $\\
std15a & $\begin{pmatrix} 4 & -2 & -1 & -2 & 1 \\ -2 & 4 & -1 & 1 & -2 \\ -1 & -1 & 4 & -1 & -1 \\ -2 & 1 & -1 & 4 & 1 \\ 1 & -2 & -1 & 1 & 4\end{pmatrix}$ & 21 & 20 & $ A_5^2 $\\
std15b & $\begin{pmatrix} 2 & 1 & 1 & 1 & 1 \\ 1 & 2 & 1 & 1 & 1 \\ 1 & 1 & 2 & 1 & 1 \\ 1 & 1 & 1 & 2 & 1 \\ 1 & 1 & 1 & 1 & 2\end{pmatrix}$ & 21 & 20 & $ A_5^2 $\\
std20 & $\begin{pmatrix} 2 & 0 & 1 & 1 & 1 \\ 0 & 2 & 1 & 1 & 1 \\ 1 & 1 & 2 & 1 & 1 \\ 1 & 1 & 1 & 2 & 1 \\ 1 & 1 & 1 & 1 & 2\end{pmatrix}$ & 11 & 10 & $ A_5^2 $\\
\end{tabular}
\end{center}

\begin{center}
\begin{tabular}{l | c | c | c |  l }
$\Lambda$ & $Q$ &  $\dim M $ & $N$ & Remarks\\
\hline
stf7a & $\begin{pmatrix} 1 & 0 & 0 & 0 & 0 & 0 & 0 \\ 0 & 1 & 0 & 0 & 0 & 0 & 0 \\ 0 & 0 & 1 & 0 & 0 & 0 & 0 \\ 0 & 0 & 0 & 1 & 0 & 0 & 0 \\ 0 & 0 & 0 & 0 & 1 & 0 & 0\\ 0 & 0 & 0 & 0 & 0 & 1 & 0 \\ 0 & 0 & 0 & 0 & 0 & 0 & 1  \end{pmatrix}$ & 4 & 3 & $ \mathbf{Z}^7 $\\
stf7d & $\begin{pmatrix} 7 & 2 & 2 & 2 & 2 & 2 & 2 \\ 2 & 4 & 0 & 0 & 0 & 0 & 0 \\ 2 & 0 & 4 & 0 & 0 & 0 & 0 \\ 2 & 0 & 0 & 4 & 0 & 0 & 0 \\ 2 & 0 & 0 & 0 & 4 & 0 & 0\\ 2 & 0 & 0 & 0 & 0 & 4 & 0 \\ 2 & 0 & 0 & 0 & 0 & 0 & 4  \end{pmatrix}$ & 13 & 12 & $ D_7^* $\\
stf8 & $\begin{pmatrix} 7 & -1 & -1 & -1 & -1 & -1 & -1 \\ -1 & 7 & -1 & -1 & -1 & -1 & -1 \\ -1 & -1 & 7 & -1 & -1 & -1 & -1 \\ -1 & -1 & -1 & -7 & -1 & -1 & -1 \\ -1 & -1 & -1 & -1 & 7 & -1 & -1\\ -1 & -1 & -1 & -1 & -1 & 7 & -1 \\ -1 & -1 & -1 & -1 & -1 & -1 & 7  \end{pmatrix}$ & 47 & 46 & $ A_7^* $\\
stf28a & $\begin{pmatrix} 2 & 1 & 1 & 1 & 1 & 1 & 1 \\ 1 & 2 & 1 & 1 & 1 & 1 & 1 \\ 1 & 1 & 2 & 1 & 1 & 1 & 1 \\ 1 & 1 & 1 & 2 & 1 & 1 & 1 \\ 1 & 1 & 1 & 1 & 2 & 1 & 1\\ 1 & 1 & 1 & 1 & 1 & 2 & 1 \\ 1 & 1 & 1 & 1 & 1 & 1 & 2  \end{pmatrix}$ & 47 & 46 & $ A_7 $\\
stf28b & $\begin{pmatrix} 3 & 1 & 1 & -1 & 1 & 1 & 1 \\ 1 & 3 & -1 & 1 & 1 & 1 & 1 \\ 1 & -1 & 3 & 1 & 1 & 1 & 1 \\ -1 & -1 & -1 & 3 & -1 & -1 & -1 \\ 1 & 1 & -1 & -1 & 3 & 1 & 1\\ 1 & 1 & -1 & -1 & 1 & 3 & 1 \\ 1 & 1 & -1 & -1 & 1 & 1 & 3  \end{pmatrix}$ & 4 & 3 & $ E_7^* $\\
stf42 & $\begin{pmatrix} 2 & 0 & 1 & 1 & 1 & 1 & 1 \\ 0 & 2 & 1 & 1 & 1 & 1 & 1 \\ 1 & 1 & 2 & 1 & 1 & 1 & 1 \\ 1 & 1 & 1 & 2 & 1 & 1 & 1 \\ 1 & 1 & 1 & 1 & 2 & 1 & 1\\ 1 & 1 & 1 & 1 & 1 & 2 & 1 \\ 1 & 1 & 1 & 1 & 1 & 1 & 2  \end{pmatrix}$ & 13 & 12 & $ P_7^* $\\
stf63 & $\begin{pmatrix} 2 & 0 & 0 & 1 & 1 & 1 & 1 \\ 0 & 2 & 1 & 1 & 1 & 1 & 1 \\ 0 & 1 & 2 & 1 & 1 & 1 & 1 \\ 1 & 1 & 1 & 2 & 1 & 1 & 1 \\ 1 & 1 & 1 & 1 & 2 & 1 & 1\\ 1 & 1 & 1 & 1 & 1 & 2 & 1 \\ 1 & 1 & 1 & 1 & 1 & 1 & 2  \end{pmatrix}$ & 4 & 3 & $ E_7 $\\
\end{tabular}
\end{center}

%

\begin{thebibliography}{20}
\frenchspacing


\bibitem{BV}
C. Bachoc, B. Venkov, \emph{Modular forms, lattices and spherical designs}, R\'eseaux Euclidiens, Designs Sph\'eriques et Formes Modulaires, Monogr. Enseign. Math., \textbf{37}, l'Enseignement Math\'ematique, Geneva, 2001, pp. 87--11.

\bibitem{BM}
C. Batut, J. Martinet, Web page on Lattices, http://www.math.u-bordeaux1.fr/~jamartin/Lattices/index.html

\bibitem{Bt}
C. Batut, \emph{Classification of quintic eutactic forms}, Math. Comp. \textbf{70} (2000), 395--417.

\bibitem{Bav}
C. Bavard, \emph{Systole et invariant d'Hermite}, J. reine angew. Math. \textbf{482} (1997), 93--120.


\bibitem{BGM}
M. Berger, P. Gauduchon and E. Mazet, \emph{Le spectre d'une vari\'et\'e riemannienne,
Lecture Notes in Mathematics}, \textbf{194}, Springer-Verlag, Berlin, 1971.


\bibitem{Bourguet}
L. Bourguet, \emph{Sur les int\'egrales Euleriennes et quelques autres fonctions uniformes}, Acta Math. \textbf{2} (1883), 261--295.

\bibitem{Chiu}
P. Chiu, \emph{Height of flat tori}, Proceedings of the American Mathematical Society \textbf{125} (1997), no. 3, 723--730.

\bibitem{Cou1}
R. Coulangeon, \emph{Spherical Designs and Zeta Functions of Lattices}, Int. Math. Res. Not. (2006), Art. ID 49620, 16 pp. 

\bibitem{CS}
J. H. Conway, N. J. A. Sloane, \emph{Sphere packings, lattices and groups}, Grundlehren der mathematischen Wissenschaften, \textbf{290}, Springer-Verlag, Berlin, 1988.

\bibitem{DR67}
B. N. Delone and S. S. Ryshkov, \emph{A contribution to the theory of the extrema of a multidimensional $\zeta$-function}, Doklady Akademii Nauk SSSR \textbf{173} (1967), 991--994 (Russian), translated as Soviet Mathematics. Doklady \textbf{8} (1967), 499--503.

\bibitem{DGS}
P. Delsarte, J. M. Goethals and J. J. Seidel, \emph{Spherical codes and designs}, Geometriae Dedicata, vol. 6, pp. 363--388.

\bibitem{Eb}
W. Ebeling, \emph{Lattices and codes}. Advance Lectures in Mathematics, 3. Auflage, Springer-Spektrum, 2012. 

\bibitem{Ep03}
P. Epstein, \emph{Zur Theorie allgemeiner Zetafunktionen (I)}, Math. Ann. \textbf{56} (1903), 614--644. 

\bibitem{GHL}
S. Gallot, D. Hulin, J. Lafontaine, \emph{Riemannian Geometry}, Springer, 2004.

\bibitem{Gruber}
P. M. Gruber, \emph{Application of an idea of Voronoi to lattice zeta functions}, Trudy Mat. Sbornik, Proc. Steklov Inst. Math. \textbf{276}, (2012) 103--124

\bibitem{M}
T. Miyake, \emph{Modular Forms}, Springer-Verlag, Berlin, 1989. 

\bibitem{Mar}
J. Martinet, \emph{Perfect Lattices in Euclidean Spaces}, Springer-Verlag, Berlin, 2003. 

\bibitem{MV}
J. Martinet and B. Venkov, \emph{Les r\'eseaux fortement eutactiques} (with an appendix by R. Coulangeon),  R\'eseaux Euclidiens, Designs Sph\'eriques et Formes Modulaires, Monogr. Enseign. Math., \textbf{37}, l'Enseignement Math\'ematique, Geneva, 2001, pp. 112--134.

\bibitem{NV00}
G. Nebe and B. Venkov, \emph{The strongly perfect lattices of dimension $10$}. Colloque International de Th\'eorie des nombres (Talence, 1999), Journal de Th\'eorie des Nombres de Bordeaux \textbf{12} (2000), no. 2, 503--518.

\bibitem{NV05}
G. Nebe and B. Venkov, \emph{Low-dimensional strongly perfect lattices. I. The $12$-dimensional case}, L'Enseignement Math\'ematique. IIe S\'erie \textbf{51} (2005), no. 1-2, 129--163. 

\bibitem{OPS}
B. Osgood, R. Phillips and P. Sarnak, \emph{Extremals of determinant of Laplacians}, J. Funct. Analysis \textbf{80} (1988), 148--211. MR \textbf{90d}:58159.

\bibitem{Q95}
H.-G. Quebbemann, \emph{Modular lattices in Euclidean spaces}, Journal of Number Theory \textbf{54} (1995), no. 2, 190--202.

\bibitem{R73}
S.S. Ry\v skov,
\emph{On the question of the final $\zeta$-optimality of lattices that yield the densest packing of n-dimensional balls},
Sibirsk. Mat. \v Z.\textbf{14} (1973), 1065–1075, 1158. 

\bibitem{Sar06}
P. Sarnak, \emph{Determinants of Laplacians; heights and finiteness}, Analysis, et Cetera, Academic Press, Massachusetts, (1990), 601--622.

\bibitem{SS}
P. Sarnak and A. Str\"ombergsson, \emph{Minima of Epstein's Zeta function and heights of flat tori}, Invent. Math. \textbf{165} (2006), no. 1, 115-151.


\bibitem{Ter1}
A. Terras, \emph{Harmonic Analysis on symmetric spaces and Applications}, Vol. I, Springer-Verlag, 1985.

\bibitem{V01}
B. Venkov, \emph{R\'eseaux et designs sph\'eriques}, R\'eseaux Euclidiens, Designs Sph\'eriques et Formes Modulaires, Monogr. Enseign. Math., \textbf{37}, l'Enseignement Math\'ematique, Geneva, 2001, pp. 10--86.

\bibitem{wrench}
Wrench, J. W. Jr., \emph{Concerning Two Series for the Gamma Function}, Math. Comput., \textbf{22}, (1968), 617--626.

\end{thebibliography}
\end{document}